\documentclass{siamltex}%

\usepackage{amsfonts}
\usepackage{amsmath}%
\setcounter{MaxMatrixCols}{30}%
\usepackage{amssymb}%
\usepackage{graphicx}

\usepackage[T1]{fontenc}
\usepackage[utf8]{inputenc}
\usepackage{mathtools} 
\usepackage{multirow}
\usepackage{lipsum}
\usepackage{algorithmic}
\usepackage[ruled,vlined]{algorithm2e}
\usepackage{cleveref}
\usepackage{cite}
\usepackage{epstopdf}
\usepackage{float}
\usepackage{diagbox}
\usepackage{color}
\usepackage[margin=1in]{geometry}
\usepackage{lscape}
\graphicspath{ {./png/} }
\usepackage{epsfig}
\usepackage{graphics}
\usepackage{bm}
\usepackage{latexsym}
\usepackage{cancel}
\usepackage{epic,eepic,euscript,verbatim,afterpage,stmaryrd,paralist}
\usepackage{amscd}
\usepackage{mathrsfs}
\usepackage{fancyhdr,fancybox}
\usepackage{listings}
\usepackage{url} 
\usepackage{enumerate,algorithm2e}
\usepackage{array}
\usepackage{ulem}
\usepackage{epstopdf,subfigure}
\usepackage{appendix}
\usepackage{comment}

\newcommand{\Dt}{\Delta}
\newcommand{\be}{\begin{equation}}
\newcommand{\ee}{\end{equation}}
\newcommand{\ba}{\begin{array}}
\newcommand{\ea}{\end{array}}
\newcommand{\bea}{\begin{eqnarray}}
\newcommand{\eea}{\end{eqnarray}}
\newcommand{\beas}{\begin{eqnarray*}}
\newcommand{\eeas}{\end{eqnarray*}}

\providecommand{\U}[1]{\protect\rule{.1in}{.1in}}

\newtheorem{remark}[theorem]{Remark}


\begin{document}

\title{On Adaptive grad-div parameter selection \thanks{The research was partially supported by NSF grant DMS-2110379.} }
\author{Xihui Xie\thanks{Department of Mathematics, University of Pittsburgh, Pittsburgh, PA 15260 (xix55@pitt.edu).} }
\maketitle

\begin{abstract}
    We propose, analyze and test a new adaptive penalty scheme that picks the penalty parameter $\epsilon$ element by element small where $\nabla\cdot u^h$ is large. We start by analyzing and testing the new scheme on the most simple but interesting setting, the Stokes problem. Finally, we extend and test the algorithm on the incompressible Navier Stokes equation on complex flow problems. Tests indicate that the new adaptive-$\epsilon$ penalty method algorithm predicts flow behavior accurately. The scheme is developed in the penalty method but also can be used to pick a grad-div stabilization parameter.
\end{abstract}

\begin{keywords}
  Stokes equations, penalty, adaptive, grad-div
\end{keywords}
\begin{AMS}
  65M12, 65M60
\end{AMS}

\section{Introduction}
Consider the incompressible Navier-Stokes equations (NSE) with no-slip boundary condition:
\begin{equation}
\begin{split}
    &u_t+u\cdot\nabla u+\nabla p-\nu\Delta u=f,\ \text{and}\ \nabla\cdot u=0,\ \text{in}\ \Omega\times[0,T],\\
    &u=0,\ \text{on}\ \partial\Omega\times[0,T],\ \text{and}\ u(x,0)=u_0(x),\ \text{in}\ \Omega.
\end{split}
\end{equation}
Here $u$ is the velocity, $f$ is the known body force, $p$ is the pressure, and $\nu$ is the viscosity. \\
The penalty approximation to the Navier-Stokes equations replaces $\nabla\cdot u=0$ by $\nabla\cdot u+\epsilon p=0$ and eliminates the pressure. This uncouples velocity and pressure, and the resulting system is much easier to solve:
\begin{equation}\label{penaly}
    \begin{aligned}
        &u_{\epsilon,t}+u_\epsilon\cdot\nabla u_\epsilon+\frac{1}{2}(\nabla\cdot u_\epsilon)u_\epsilon-\nu\Dt u_\epsilon-\nabla(\frac{1}{\epsilon}\nabla\cdot u_\epsilon)=f\ \text{in}\ \Omega\times[0,T],\\
        &u_\epsilon=0, \text{on}\ \partial\Omega\times[0,T],\ \text{and}\ u_\epsilon(x,0)=u_0(x),\ \text{in}\ \Omega.
    \end{aligned}
\end{equation}
Here $u_\epsilon\cdot\nabla u_\epsilon+\frac{1}{2}(\nabla\cdot u_\epsilon)u_\epsilon$ is the modified bilinear term introduced by Temam \cite{teman1968}. This bilinear term ensures the dissipativity of the system \eqref{penaly}. 
Supposing the spacial discretization, a simple penalty method is given as follows. Given $u^n\approx u(x,t_n), k_n=t^{n+1}-t^n$ the $n^{th}$ time step
\begin{equation}\label{nse}
\begin{split}
    &\frac{u_\epsilon^{n+1}-u_\epsilon^n}{k_n}+u_\epsilon^n\cdot\nabla u_\epsilon^{n+1}+\frac{1}{2}(\nabla\cdot u_\epsilon^{n})u_\epsilon^{n+1}-\nu\Delta u_\epsilon^{n+1}-\nabla (\frac{1}{\epsilon_{n+1}}\nabla\cdot u_\epsilon^{n+1})=f(t^{n+1}),\ \text{in}\ \Omega,\\
    &u_\epsilon^{n+1}=0,\ \text{on}\ \partial\Omega\  \text{and}\ u_\epsilon^0=u_\epsilon(x,0)=u_0(x),\ \text{in}\ \Omega.
\end{split}
\end{equation}
The term $-\nabla(\epsilon^{-1}\nabla\cdot u)$ also arises in artificial compression method and with grad-div stabilization.
Penalty methods require less computing time and reduced storage but still face two unsolved problems:
\begin{enumerate}
    \item How to recover the pressure accurately, and
    \item How to pick an effective value of the grad-div coefficient $\epsilon$.
\end{enumerate}
Herein we present a self-adaptive algorithm answering question 2. 

There are many papers devoted to the parameter choice of grad-div term for both grad-div stabilization problem and penalty problem. 
Jenkins, John, Linke, and Rebholz \cite{grad-div-jenkins2014parameter} found that the grad-div parameter for Stokes problem depends on the used norm, the solution, the finite element space and type of mesh used.
Ainsworth, Allendes, Barrenechea and Rankin \cite{stabilized2013adaptive} introduced an approach to select stabilization parameters for the Stokes problem.

The velocity error of penalty methods is also sensitive to the choice of $\epsilon$, see Bercovier and Engelman \cite{BERCOVIER1979181}. 
Care must be taken when choosing $\epsilon$. If $\epsilon$ is too large, it will poorly model incompressible flow. 
Choosing $\epsilon$ too small will cause numerical conditioning problems, see Hughes, Liu and Brooks \cite{hughes1979finite}. In \cite{hughes1979finite}, the authors introduced a theory for determining the penalty parameter, which depends on Reynolds number Re and viscosity $\mu$. 
The optimal choice of the penalty parameter also varies according to the time discretization schemes and space discretization schemes used, see Shen \cite{Shen1995penalty}.
With so many dependencies, an automatic choice of $\epsilon$ naturally becomes a problem to consider.

In Layton and McLaughlin \cite{layton2019doublyadaptive} self-adaptive $\epsilon$ selection in time (but not in space) algorithms were developed, analyzed and tested. The basic idea in \cite{layton2019doublyadaptive} is to monitor $\|\nabla\cdot u^n\|$ and pick $\epsilon=\epsilon(t_n)$ to make $\|\nabla\cdot u^n\|<$ Tolerance in \eqref{nse} in the computation of $u^{n+1}$. 

The natural question we answer herein is: 
can we let $\epsilon=\epsilon(x,t)$ and pick $\epsilon(x,t_n)$ pointwise or element by element small where $\nabla\cdot u^h$ is large to enforce in a realizable sense
\begin{equation}\label{tol}
    \int_\Omega |\nabla\cdot u^h|^2\ dx<\text{Tolerance}^2.
\end{equation}
This means $\epsilon$ is chosen small where $\nabla\cdot u^h$ is large (and large where small). As a result, the term $(\epsilon^{-1}\nabla\cdot u^h,\nabla\cdot v^h)$ becomes nonlinear. To our knowledge, this natural idea has not been considered. 
Picking $\epsilon$ pointwise and elementwise are two related ideas, but the resulting two algorithms are different; see \eqref{vform2} and \eqref{vform} below.

The idea we use is the path of many adaptive methods: monitor the residual (the left-hand side of \eqref{tol}), localize the global tolerance \eqref{tol} and where the local residual $\int_\Dt |\nabla\cdot u^h|^2\ dx$ is large, pick $\epsilon_\Dt$ small (and visa versa). 
Picking $\epsilon$ locally in space leads to a nonlinear grad-div term in \eqref{nse}
quite amenable to numerical analysis. In the next sections, we start the detailed analysis and test of this idea using the simplest setting, the Stokes problem.

\subsection{Previous Work}
Bernardi, Girault and Hecht \cite{hecht2003choix,hecht2003posteriori}  derived posterior error estimates for the Stokes problem with penalty. They performed the tests on adaptive meshes and also tested using local penalty parameters.
Falk \cite{falk1976finite} derived a new finite element method that uses the trial function, which is not div-free. By eliminating the constraint, one can use a simple finite elements, which inspired the proof in Section 3.2. 
Heavner and Rebholz \cite{tol2017locally} considered a local choice of grad-div stabilization parameter. And in numerical tests, they showed that local choice of stabilization parameter provides more accurate solutions.
\subsection{Formulation}
We begin the analysis and testing of this idea for the simplest interesting setting, the Stokes problem
\begin{equation}\label{stokes}
    -\nu\Dt u+\nabla p=f(x),\quad\quad \nabla\cdot u=0.
\end{equation}
On a bounded, open polygonal domain $\Omega$ subject to no-slip boundary conditions $u=0$ on $\partial \Omega$. Let $d$ denote the dimension of $\Omega$, $d=dim(\Omega)=2$ or $3$.

The penalty method replaces $\nabla\cdot u=0$ by $\nabla\cdot u_\epsilon+\epsilon p=0$ and eliminate pressure using $p=-\epsilon^{-1}\nabla\cdot u_\epsilon$:
\begin{equation}\label{eq:stokes-penalty}
    -\nu\Dt u_\epsilon-\nabla\left(\frac{1}{\epsilon}\nabla\cdot u_\epsilon\right)=f(x)\ \text{in}\ \Omega.
\end{equation}
Let $X^h\subset X:=(H^{0,1}(\Omega))^d, d=2$ or 3 denote a finite element space for the fluid velocity. $(\cdot,\cdot)$ is the $L^2$ inner product with norm $\|\cdot\|$ and $\Dt$ denotes a mesh element (so that $\int_\Omega \phi\ dx=\sum_\Dt \int_\Dt \phi\ dx$). The area/volume of a region D is denoted $|D|$. The $L^2(\Dt)$ norm on a mesh element $(\int_\Dt\phi^2\ dx)^{1/2}$ is denoted as $\|\phi\|_\Dt$.

The penalty approximation we consider to \eqref{stokes} is: find $u^h\in X^h$ such that
\begin{equation}\label{eq:tvform}
    \nu(\nabla u_\epsilon^h,\nabla v^h)+\sum_{\Dt}\int_\Dt \epsilon_\Dt^{-1}\nabla\cdot u_\epsilon^h\nabla\cdot v^h\ dx=(f, v^h),  \quad\forall v^h \in V^h.
\end{equation}
The idea is the same as behind most adaptive algorithms: Monitor the residual to control the error; localize a global residual tolerance; where the local residual $\|\nabla\cdot u^h\|_\Dt^2$ is large pick $\epsilon_\Dt$ small.

To develop this, we begin with the basic stability estimate. Setting $v^h=u^h$ in \eqref{eq:tvform} we find
\begin{equation*}
    \nu\|\nabla u_\epsilon^h\|^2+\sum_\Dt\int_\Dt\epsilon_\Dt^{-1}|\nabla\cdot u_\epsilon^h|^2\ dx=(f,u_\epsilon^h)=(f,u)+o(1),
\end{equation*}
\begin{equation*}
    \text{thus}\ \sum_\Dt \int_\Dt\epsilon_\Dt^{-1} |\nabla\cdot u_\epsilon^h|^2\ dx=\mathcal{O}(1).
\end{equation*}
This sugggests that globally halving (doubling) $\epsilon$ halves (doubles) $\|\nabla\cdot u_\epsilon^h\|^2$.

Next, we localize the global tolerance TOL 
for $\|\nabla\cdot u_\epsilon^h\|$ as follows:\\
We seek $\|\nabla\cdot u_\epsilon^h\|^2\approx\frac{1}{2}$TOL$^2$ or
\begin{align*}
    \|\nabla\cdot u_\epsilon^h\|^2=\sum_\Dt\int_\Dt |\nabla\cdot u_\epsilon^h|^2\ dx&\approx \frac{1}{2}\text{TOL}^2 = \frac{1}{2}\sum_\Dt \frac{TOL^2}{|\Omega|}|\Dt|.
\end{align*}
Thus we define the local tolerance
\begin{equation*}
    \text{LocTol}_\Dt:=\frac{1}{2}\frac{TOL^2}{|\Omega|}|\Dt|,
\end{equation*}
and seek to enforce
\begin{equation*}
    \|\nabla\cdot u_\epsilon^h\|_\Dt^2\approx\text{LocTol}_\Dt.
\end{equation*}
If this local tolerance is satisfied, the global tolerance is satisfied:
\begin{equation*}
    \|\nabla\cdot u_\epsilon^h\|^2=\sum_\Dt\int_\Dt|\nabla\cdot u_\epsilon^h|^2\ dx\approx \sum_\Dt LocTol_\Dt=\frac{1}{2}TOL^2.
\end{equation*}

The usual procedure would be to select (on each triangle $\Dt$)  $\epsilon_{old}$, solve for $u_\epsilon^h$, compute the ratio
\begin{equation*}
    r=\frac{\text{LocTol}_\Dt}{\|\nabla\cdot u_\epsilon^h\|_\Dt^2 },
\end{equation*}
then adjust $\epsilon$ by $\epsilon_{new}=r\times \epsilon_{old}$ and resolve.
The first step is therefore (starting with $\epsilon_\Dt\equiv1$)
\begin{equation*}
    \epsilon_\Dt =\|\nabla\cdot u_\epsilon^h\|_\Dt ^{-2}\times\text{LocTol}_\Dt,
\end{equation*}
There are two options. Both result in a nonlinear discretization.\\
Option 1. Elementwise Penalty (EP)
\begin{equation*}
    \epsilon_\Dt:=\frac{LocTol_\Dt}{\|\nabla\cdot u^h_\epsilon\|_\Dt^2},
\end{equation*}
so that
\begin{align}\label{grad-div-vform2}
 \sum_\Dt \int_\Dt\epsilon_\Dt^{-1}\nabla\cdot u_\epsilon^h\nabla\cdot v^h\ dx&=\sum_\Dt LocTol_\Dt^{-1}\|\nabla\cdot u_\epsilon^h\|_\Dt^2\int_\Dt\nabla\cdot u_\epsilon^h\nabla\cdot v^h\ dx .
\end{align}
Then \eqref{eq:tvform} becomes: find $u^h_\epsilon\in X^h$ such that
\begin{equation}\label{vform2}
    \int_\Omega\nu\nabla u^h_\epsilon:\nabla v^h\ dx+\sum_\Dt\frac{1}{LocTol_\Dt}\|\nabla\cdot u^h_\epsilon\|^2_\Dt\int_\Dt\nabla\cdot u^h_\epsilon\nabla\cdot v^h\ dx=\int_\Omega f\cdot v^h\ dx.
\end{equation}
Option 2. Pointwise Penalty (PP)
\begin{equation*}
    \epsilon_\Dt(x):=\frac{LocTol_\Dt}{|\nabla\cdot u^h_\epsilon(x)|^2},
\end{equation*}
so that
\begin{align}\label{grad-div-vform1}
     \sum_\Dt \int_\Dt\epsilon_\Dt^{-1}\nabla\cdot u_\epsilon^h\nabla\cdot v^h\ dx&=\sum_\Dt LocTol_\Dt^{-1}\int_\Dt |\nabla\cdot u_\epsilon^h|^2\nabla\cdot u_\epsilon^h\nabla\cdot v^h\ dx .
\end{align}
Then \eqref{eq:tvform} becomes: find $u^h_\epsilon\in X^h$ such that
\begin{equation}\label{vform}
    \int_\Omega\nu\nabla u_\epsilon^h:\nabla v^h\ dx+\sum_\Dt\frac{1}{LocTol_\Dt}\int_\Dt |\nabla\cdot u_\epsilon^h|^2\nabla\cdot u_\epsilon^h\nabla\cdot v^h\ dx=\int_\Omega f\cdot v^h\ dx.
\end{equation}

We focus herein on the analysis of option 2 (PP) and numerical result of option 1 (EP). In option 2 (PP), the resulting nonlinearity is both strongly monotone and locally Lipschitz continuous, sharing structures with the p-Laplacian. Then, there is a well-trodden analytical path to be adapted here. Before proceeding, we address two points:
\begin{enumerate}
    \item Imposing the global condition locally suggests but does not imply the local condition is satisfied. This will be tested in our experiments Section 6.1. We adapt based on the local condition but aim for global TOL to be satisfied.
    \item No analysis herein addresses how to pick TOL. 
    TOL is user supplied.
\end{enumerate}

Section 2 introduces some notation and preliminaries. Section 3 analyzes the stability and error for the Stokes problem of the new pointwise penalty (PP) method. In Section 4, algorithmic aspects are discussed for the Stokes problem and the Navier Stokes problem using the elementwise penalty (EP) method. Section 5, we present three numerical tests using the elementwise penalty (EP). The first two are for the Stokes problem and the third one is an extension to the Navier Stokes equations. Finally, in Section 6, we draw conclusions and point out future research directions.

\section{Notation and Preliminaries}
Let $H_0^1(\Omega)=\{u\in L^2(\Omega):\nabla u\in L^2(\Omega)\ \text{and}\ u|_{\partial\Omega}=0\ \text{in}\ L^2(\partial \Omega)\}$.
Let $X$ be the velocity space $=(H^{1}_0(\Omega))^d$, $Q$ be the pressure space $=L_0^2(\Omega)$. Let $X^h$ be the finite element velocity space of continuous piecewise polynomials based on conforming partition of $\Omega$ into elements, denoted $\Dt$, $X^h\subset X$. Assume $X^h$ satisfies the approximation properties:
\begin{equation}\label{approx-prop}
\begin{aligned}
    \inf_{v\in X^h}\|u-v\|&\leq Ch^{m+1}|u|_{m+1},\quad u\in H^{m+1}(\Omega)^d,\\
    \inf_{v\in X^h}\|\nabla(u-v)\|&\leq Ch^{m}|u|_{m+1},\quad u\in H^{m+1}(\Omega)^d.
\end{aligned}
\end{equation}
The space $H^{-1}(\Omega)$ denotes the dual space of bounded linear functionals defined on $H_0^1(\Omega)
$. This space is equipped with the norm:
\begin{equation*}
    \|f\|_{-1}=\sup_{0\neq v\in X}\frac{(f,v)}{\|\nabla v\|}.
\end{equation*}
Denote $a(u,v,w)=\sum_\Dt LocTol_\Dt^{-1}\int_\Dt|\nabla\cdot u|^2\nabla\cdot v\nabla\cdot w\ dx$ for any $u,v,w\in X$.

\begin{lemma}(Useful inequalities see p.7 \cite{cfdbook})
(H\"{o}lder's and Young's inequalities)
For any $\delta$, $0 < \delta < \infty$ and $\frac{1}{p}+\frac{1}{q}=1, 1\leq p,q \leq \infty$,
\begin{equation}\label{holder+young}
\begin{aligned}
    (u,v)\leq\|u\|_{L^p}\|v\|_{L^q},\ \text{and}\ 
    (u,v)\leq \frac{\delta}{p}\|u\|_{L^p}^p+\frac{\delta^{-q/p}}{q}\|v\|_{L^q}^q.
\end{aligned}
\end{equation}
\end{lemma}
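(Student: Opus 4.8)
The two assertions are the classical Hölder and Young inequalities, so the plan is to reduce both to the elementary scalar Young inequality and then bootstrap. The whole argument is standard (the statement is cited from \cite{cfdbook}), so I would keep it short.

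First I would establish the pointwise scalar inequality: for nonnegative reals $a,b$ and conjugate exponents $\frac1p+\frac1q=1$ with $1<p,q<\infty$,
\[
    ab\leq \frac{a^p}{p}+\frac{b^q}{q}.
\]
The cleanest route is convexity: since $\log$ is concave, write $ab=\exp\big(\tfrac1p\log a^p+\tfrac1q\log b^q\big)$ and apply the concavity (Jensen) inequality with the weights $\tfrac1p,\tfrac1q$, which sum to one; exponentiating gives the bound at once. The boundary cases $p=1,q=\infty$ (and vice versa) I would dispatch separately, since there $(u,v)\leq\|u\|_{L^1}\|v\|_{L^\infty}$ is merely the pointwise bound $|uv|\le|u|\,\|v\|_{L^\infty}$ followed by integration.

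Next, for Hölder's inequality $(u,v)\leq\|u\|_{L^p}\|v\|_{L^q}$ I would normalize. Assuming $\|u\|_{L^p}$ and $\|v\|_{L^q}$ are finite and nonzero (the degenerate cases being immediate), set $\hat u=u/\|u\|_{L^p}$ and $\hat v=v/\|v\|_{L^q}$, apply the scalar inequality pointwise to $|\hat u(x)|$ and $|\hat v(x)|$, and integrate over $\Omega$. The right-hand side integrates to $\tfrac1p+\tfrac1q=1$, so $\int_\Omega|\hat u\hat v|\,dx\le1$, which rearranges to the claim after noting $(u,v)\le\int_\Omega|uv|\,dx$.

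Finally, the $\delta$-weighted form follows by composing Hölder with a rescaled scalar Young inequality: applying the scalar bound to the pair $\delta^{1/p}\|u\|_{L^p}$ and $\delta^{-1/p}\|v\|_{L^q}$ yields
\[
    \|u\|_{L^p}\|v\|_{L^q}\leq \frac{\delta}{p}\|u\|_{L^p}^p+\frac{\delta^{-q/p}}{q}\|v\|_{L^q}^q,
\]
and chaining this after Hölder produces the second displayed inequality. None of these steps presents a genuine obstacle; the only care needed is the bookkeeping of the degenerate and boundary exponent cases, which I would handle by inspection rather than through the convexity argument.
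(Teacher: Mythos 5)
Your argument is correct and is the standard textbook proof: scalar Young via concavity of the logarithm, Hölder by normalization and pointwise integration, and the $\delta$-weighted form by rescaling before applying scalar Young. The paper itself offers no proof of this lemma --- it is stated as a known result with a citation --- so there is nothing to compare against; the only point worth flagging is that, as you note, the weighted Young inequality in the second display is genuinely meaningless at the boundary exponents $p=1$, $q=\infty$ (the term $\delta^{-q/p}\|v\|_{L^q}^q/q$ is undefined there), so strictly speaking that half of the lemma should be read as restricted to $1<p,q<\infty$, which is in any case the only regime the paper uses (it invokes the inequality with $p=q=2$ and with $p=4$, $q=4/3$).
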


On each mesh element $\Dt$ denote $(\phi,\psi)_\Dt=\int_\Dt \phi\cdot\psi\ dx$.
The nonlinear term satisfies the following, often called Strong Monotonicity, and Local Lipschitz continuity. 
\begin{lemma}(Strong Monotonicity and Local Lipschitz continuity) Let $u,v,w\in X$, on each mesh element $\Dt$, then there exist constants $C_1, C_2$ such that the following inequalities hold:
\begin{align}\label{monotone}
    (|\nabla\cdot u|^2\nabla\cdot u-|\nabla\cdot w|^2\nabla\cdot w,\nabla\cdot(u-w))_\Dt\geq C_1\|\nabla\cdot(u-w)\|_{L^4(\Dt)}^4, \\
    (|\nabla\cdot u|^2\nabla\cdot u-|\nabla\cdot w|^2\nabla\cdot w,\nabla\cdot v)_\Dt\leq C_2r^2\|\nabla\cdot(u-w)\|_{L^4(\Dt)}\|\nabla\cdot v\|_{L^4(\Dt)}, \label{llc}\\
    \text{where}\ r=\max\{\|\nabla\cdot u\|_{L^4(\Dt)},\|\nabla\cdot w\|_{L^4(\Dt)}\}. \nonumber
\end{align}
\end{lemma}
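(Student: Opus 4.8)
The plan is to reduce both estimates to elementary pointwise inequalities for the scalar cubic nonlinearity and then integrate. Since the divergence of a vector field is scalar-valued, on each element I may write $a:=\nabla\cdot u$, $b:=\nabla\cdot w$, $c:=\nabla\cdot v$ as scalar functions, and the nonlinearity reduces to $|s|^2s=s^3$. Thus the map in question is $\phi(s)=s^3$, which is precisely the one-dimensional $p$-Laplacian structure with $p=4$, and everything will follow from the factorization $a^3-b^3=(a-b)(a^2+ab+b^2)$ together with H\"older's inequality \eqref{holder+young}.

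For the strong monotonicity \eqref{monotone}, I would first establish the pointwise bound. Using the factorization, $(\phi(a)-\phi(b))(a-b)=(a-b)^2(a^2+ab+b^2)$. Writing $a^2+ab+b^2=\tfrac12(a^2+b^2)+\tfrac12(a+b)^2\geq\tfrac12(a^2+b^2)\geq\tfrac14(a-b)^2$, I obtain $(\phi(a)-\phi(b))(a-b)\geq\tfrac14(a-b)^4$ pointwise. Integrating over $\Dt$ gives \eqref{monotone} with $C_1=\tfrac14$; no cross terms or cancellations arise because the inequality already holds at every point.

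For the local Lipschitz estimate \eqref{llc}, I would again start pointwise: from the same factorization and $|ab|\leq\tfrac12(a^2+b^2)$ one gets $a^2+ab+b^2\leq\tfrac32(a^2+b^2)\leq\tfrac32(|a|+|b|)^2$, hence $|\phi(a)-\phi(b)|\leq\tfrac32(|a|+|b|)^2|a-b|$. Therefore
\[
(\phi(a)-\phi(b),c)_\Dt\leq\tfrac32\int_\Dt(|a|+|b|)^2\,|a-b|\,|c|\ dx .
\]
The key step is then to split this integral by H\"older's inequality using the three exponents $2,4,4$ (so that $\tfrac12+\tfrac14+\tfrac14=1$), placing $|a-b|$ and $|c|$ in $L^4(\Dt)$ and the quadratic factor in $L^2(\Dt)$, which yields $\tfrac32\,\|(|a|+|b|)^2\|_{L^2(\Dt)}\,\|a-b\|_{L^4(\Dt)}\,\|c\|_{L^4(\Dt)}$. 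Finally, $\|(|a|+|b|)^2\|_{L^2(\Dt)}=\||a|+|b|\|_{L^4(\Dt)}^2\leq(\|a\|_{L^4(\Dt)}+\|b\|_{L^4(\Dt)})^2\leq 4r^2$, so \eqref{llc} holds with $C_2=6$.

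The computations are elementary, so the only point that needs care---and the one I would flag as the main obstacle---is the bookkeeping in the Lipschitz step: one must choose the H\"older exponents so that the two factors destined for $L^4$ match exactly the norms appearing on the right of \eqref{llc}, and then convert $\|(|a|+|b|)^2\|_{L^2(\Dt)}$ into $r^2$ via the identity $\|g^2\|_{L^2}=\|g\|_{L^4}^2$ and the triangle inequality in $L^4(\Dt)$. Everything is local to a single element $\Dt$, so no inverse inequalities, trace estimates, or summation over elements are needed at this stage.
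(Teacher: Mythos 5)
Your argument is correct. For the local Lipschitz bound \eqref{llc} you follow essentially the same route as the paper: the factorization $a^3-b^3=(a-b)(a^2+ab+b^2)$, the pointwise bound by $(|a|+|b|)^2|a-b||c|$, and a three-factor H\"older split with exponents $4,4,2$ followed by the identity $\|(|a|+|b|)^2\|_{L^2(\Dt)}=\||a|+|b|\|_{L^4(\Dt)}^2\leq 4r^2$; your extra factor $\tfrac32$ and the explicit constant $C_2=6$ are harmless (the paper uses $a^2+ab+b^2\leq(|a|+|b|)^2$ directly, so its constant is slightly sharper). The one place you genuinely depart from the paper is strong monotonicity \eqref{monotone}: the paper omits that half of the proof, deferring to the $p$-Laplacian literature of Barrett--Liu and Glowinski--Marroco, whereas you supply the elementary pointwise inequality $a^2+ab+b^2=\tfrac12(a^2+b^2)+\tfrac12(a+b)^2\geq\tfrac12(a^2+b^2)\geq\tfrac14(a-b)^2$, which upon multiplying by $(a-b)^2$ and integrating over $\Dt$ yields \eqref{monotone} with the explicit constant $C_1=\tfrac14$ --- a self-contained and slightly more informative version of the cited estimate.
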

\begin{proof}
(of Local Lipschitz continuity)
\begin{align*}
    &(|\nabla\cdot u|^2\nabla\cdot u-|\nabla\cdot w|^2\nabla\cdot w,\nabla\cdot v)_\Dt\\
    &=(|\nabla\cdot u|^2\nabla\cdot u-|\nabla\cdot u|^2\nabla\cdot w,\nabla\cdot v)_\Dt
    +(|\nabla\cdot u|^2\nabla\cdot w-|\nabla\cdot w|^2\nabla\cdot w,\nabla\cdot v)_\Dt \\
    &=\int_\Dt |\nabla\cdot u|^2\nabla\cdot(u-w)\nabla\cdot v\ dx+\int_\Dt\nabla\cdot w(\nabla\cdot u+\nabla\cdot w)(\nabla\cdot u-\nabla\cdot w)\nabla\cdot v\ dx \\
    &=\int_\Dt \nabla\cdot(u-w)\nabla\cdot v(|\nabla\cdot u|^2+\nabla\cdot u\nabla\cdot w+|\nabla\cdot w|^2)\ dx \\
    &\leq \int_\Dt |\nabla\cdot(u-w)||\nabla\cdot v|(|\nabla\cdot u|+|\nabla\cdot w|)^2\ dx \\
    &\leq \|\nabla\cdot(u-w)\|_{L^4(\Dt)}\|\nabla\cdot v\|_{L^4(\Dt)}\left(\int_\Dt (|\nabla\cdot u|+|\nabla\cdot w|)^4\ dx\right)^{1/2},
\end{align*}
Denote $r=\max(\|\nabla\cdot u\|_{L^4(\Dt)},\|\nabla\cdot w\|_{L^4(\Dt)})$, then we have
\begin{equation*}
    (|\nabla\cdot u|^2\nabla\cdot u-|\nabla\cdot w|^2\nabla\cdot w,\nabla\cdot v)_\Dt\leq C_2r^2\|\nabla\cdot(u-w)\|_{L^4(\Dt)}\|\nabla\cdot v\|_{L^4(\Dt)}.
\end{equation*}
The proof of Strong Monotonicity follows similarly to the p-Laplacian in Barrett and Liu \cite{barrett1993finite}, Glowinski and Marroco \cite{p-laplacian}, omit the part here.
\end{proof}

\section{Analysis} In this section, we derived stability bounds for both new penalty methods (PP \eqref{vform} and EP \eqref{vform2})  and error estimates for the pointwise penalty (PP) method \eqref{vform}.
\subsection{Stability} First, we consider the elementwise penalty (EP) method \eqref{vform2}. Recall that $LocTol_\Dt=\frac{1}{2}TOL^2\frac{|\Dt|}{|\Omega|}$.
\begin{theorem}
Suppose $\mathcal{T}^h$ be a mesh of $\Omega$ and $\Dt$ denote a mesh element in $\mathcal{T}^h$, the solution to \eqref{vform2} is stable, and the following stability bound holds
\begin{equation*}
    \frac{\nu}{2}\|\nabla u^h_\epsilon\|^2+\sum_\Dt\frac{1}{LocTol_\Dt}\|\nabla\cdot u^h_\epsilon\|_\Dt^4\leq\frac{1}{2\nu}\|f\|_{-1}^2.
\end{equation*}
\end{theorem}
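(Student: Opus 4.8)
The plan is to run the standard energy argument: choose the test function $v^h=u^h_\epsilon$ in the discrete weak form \eqref{vform2}, recognize that the nonlinear grad-div term is nonnegative and in fact collapses to the quartic quantity appearing on the left of the claimed bound, and then absorb the forcing term using the dual norm together with Young's inequality. I will assume throughout that a solution $u^h_\epsilon\in X^h$ exists; establishing existence for this nonlinear system is a separate matter (handled via the monotone-operator machinery sketched for the PP method), whereas the stability estimate here is purely an a priori bound obtained from a single test.

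Setting $v^h=u^h_\epsilon$, the viscous term becomes $\nu\|\nabla u^h_\epsilon\|^2$. For the penalty term I use the key observation that on each element $\int_\Dt(\nabla\cdot u^h_\epsilon)^2\,dx=\|\nabla\cdot u^h_\epsilon\|_\Dt^2$, so each summand reads
\[
\frac{1}{LocTol_\Dt}\|\nabla\cdot u^h_\epsilon\|_\Dt^2\int_\Dt(\nabla\cdot u^h_\epsilon)^2\,dx=\frac{1}{LocTol_\Dt}\|\nabla\cdot u^h_\epsilon\|_\Dt^4.
\]
Because $LocTol_\Dt>0$, this whole sum is nonnegative and is precisely the second term of the asserted inequality; no cancellation or sign trouble arises. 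Hence the left-hand side of the tested identity equals $\nu\|\nabla u^h_\epsilon\|^2+\sum_\Dt LocTol_\Dt^{-1}\|\nabla\cdot u^h_\epsilon\|_\Dt^4$.

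For the right-hand side, I bound $(f,u^h_\epsilon)\le\|f\|_{-1}\|\nabla u^h_\epsilon\|$ by the definition of the $H^{-1}$ norm, and then apply Young's inequality \eqref{holder+young} with $p=q=2$ and $\delta=\nu$ to obtain $\|f\|_{-1}\|\nabla u^h_\epsilon\|\le\frac{\nu}{2}\|\nabla u^h_\epsilon\|^2+\frac{1}{2\nu}\|f\|_{-1}^2$. Substituting this and moving the $\frac{\nu}{2}\|\nabla u^h_\epsilon\|^2$ term to the left yields exactly the stated bound. The argument is essentially one line once $v^h=u^h_\epsilon$ is chosen; the only point requiring care is the identification of the quartic term on each element, and the genuine work in this circle of ideas sits in the existence/uniqueness theory rather than in this a priori estimate.
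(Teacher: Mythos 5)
Your proof is correct and follows essentially the same route as the paper's: test with $v^h=u^h_\epsilon$, identify the elementwise quartic term, bound $(f,u^h_\epsilon)$ by $\|f\|_{-1}\|\nabla u^h_\epsilon\|$, and absorb via Young's inequality. Your added remark that existence of $u^h_\epsilon$ is assumed (and is a separate matter) is a fair caveat the paper leaves implicit, but the estimate itself matches.
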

\begin{proof}
Take $v^h=u_\epsilon^h$ in \eqref{vform2}:
\begin{align*}
    \nu\|\nabla u_\epsilon^h\|^2+\sum_\Dt\frac{1}{LocTol_\Dt}\|\nabla\cdot u_\epsilon^h\|_\Dt^2\|\nabla\cdot u_\epsilon^h\|_\Dt^2=\int_\Omega f\cdot u_\epsilon^h\ dx,
\end{align*}
As $(f,u_\epsilon^h)\leq \|f\|_{-1}\|\nabla u^h_\epsilon\|$ and apply H\"{o}lder's and Young's inequalities \eqref{holder+young}:
\begin{equation*}
    \nu\|\nabla u^h_\epsilon\|^2+\sum_\Dt\frac{1}{LocTol_\Dt}\|\nabla\cdot u^h_\epsilon\|_\Dt^4\leq\frac{1}{2\nu}\|f\|_{-1}^2+\frac{\nu}{2}\|\nabla u^h_\epsilon\|^2.
\end{equation*}
Combine similar terms and the claimed stability bound then follows.
\end{proof}\\
From Theorem 3.1, we have the following proposition.
\begin{proposition}
Let $N$ denote the number of elements $\Dt$ in mesh $\mathcal{T}^h$ and TOL denote the global tolerance, then the solution $u_\epsilon^h$ to \eqref{vform2} satisfy
\begin{equation*}
    \|\nabla\cdot u^h_\epsilon\|^4\leq \left(\frac{N\cdot\max|\Dt|}{4\nu|\Omega|}\right)TOL^2\|f\|_{-1}^2.
\end{equation*}
\end{proposition}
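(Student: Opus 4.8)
The plan is to bridge the global quantity $\|\nabla\cdot u_\epsilon^h\|^4$ to the elementwise quartic sum $\sum_\Dt \|\nabla\cdot u_\epsilon^h\|_\Dt^4$ that Theorem 3.1 already controls, and then substitute the explicit form of $LocTol_\Dt$. Since the $L^2$ norm decomposes additively over mesh elements, I would begin from $\|\nabla\cdot u_\epsilon^h\|^2 = \sum_\Dt \|\nabla\cdot u_\epsilon^h\|_\Dt^2$. Squaring and viewing this as the pairing of the vector $(\|\nabla\cdot u_\epsilon^h\|_\Dt^2)_\Dt$ with the all-ones vector, the Cauchy--Schwarz inequality (equivalently a power-mean inequality over the $N$ elements) gives
\[
\|\nabla\cdot u_\epsilon^h\|^4 = \Big(\sum_\Dt \|\nabla\cdot u_\epsilon^h\|_\Dt^2\Big)^2 \leq N\sum_\Dt \|\nabla\cdot u_\epsilon^h\|_\Dt^4 .
\]
This is the one genuinely algebraic step, and it is where the factor $N$ enters the bound.

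Next I would extract a bound for the quartic sum from Theorem 3.1. Discarding the nonnegative viscous term $\frac{\nu}{2}\|\nabla u_\epsilon^h\|^2$ leaves $\sum_\Dt LocTol_\Dt^{-1}\|\nabla\cdot u_\epsilon^h\|_\Dt^4 \leq \frac{1}{2\nu}\|f\|_{-1}^2$. Inserting $LocTol_\Dt = \tfrac{1}{2}TOL^2 |\Dt|/|\Omega|$ and bounding each coefficient from below by replacing $|\Dt|$ with its maximum, i.e. $LocTol_\Dt^{-1} \geq 2|\Omega|/(TOL^2\,\max|\Dt|)$, I can pull a uniform constant out of the sum to obtain
\[
\sum_\Dt \|\nabla\cdot u_\epsilon^h\|_\Dt^4 \leq \frac{TOL^2\,\max|\Dt|}{4\nu|\Omega|}\|f\|_{-1}^2 .
\]

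Finally I would chain the two displays to recover the stated estimate. I do not expect a real obstacle here: the result is a direct consequence of Theorem 3.1 together with elementary inequalities. The only point demanding a moment's care is the direction of the inequality when replacing $LocTol_\Dt^{-1}$ by a uniform lower bound: one must use $\max|\Dt|$ (not $\min|\Dt|$), since that makes each coefficient smaller and therefore preserves the upper bound on the sum. It is worth noting that the appearance of $N\,\max|\Dt|$ signals the estimate is not sharp on strongly graded meshes, where $N\,\max|\Dt|$ may considerably exceed $|\Omega|$; the bound is cleanest on quasi-uniform meshes, for which $N\,\max|\Dt| \approx |\Omega|$ and the right-hand side reduces essentially to $TOL^2\|f\|_{-1}^2/(4\nu)$.
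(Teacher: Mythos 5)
Your proposal is correct and follows essentially the same route as the paper's proof: both rest on discarding the viscous term in Theorem 3.1, substituting $LocTol_\Dt=\tfrac{1}{2}TOL^2|\Dt|/|\Omega|$ with the bound $|\Dt|\leq\max|\Dt|$, and applying Cauchy--Schwarz over the $N$ elements to relate $\bigl(\sum_\Dt\|\nabla\cdot u^h_\epsilon\|_\Dt^2\bigr)^2$ to $\sum_\Dt\|\nabla\cdot u^h_\epsilon\|_\Dt^4$. The only difference is the order of the steps (you apply Cauchy--Schwarz first, the paper last), which is immaterial.
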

\begin{proof}
From Theorem 3.1, we have
\begin{equation*}
\sum_\Dt\frac{1}{LocTol_\Dt}\|\nabla\cdot u^h_\epsilon\|_\Dt^4\leq\frac{1}{2\nu}\|f\|_{-1}^2.
\end{equation*}
Recall $LocTol_\Dt=\frac{1}{2}TOL^2\frac{|\Dt|}{|\Omega|}$,
\begin{align*}
    \sum_\Dt\frac{2}{TOL^2}\frac{|\Omega|}{|\Dt|}\|\nabla\cdot u^h_\epsilon\|_\Dt^4\leq\frac{1}{2\nu}\|f\|_{-1}^2,\\
    \sum_\Dt\frac{1}{|\Dt|}\|\nabla\cdot u^h_\epsilon\|_\Dt^4\leq\frac{TOL^2}{4\nu|\Omega|}\|f\|_{-1}^2,\\
    \frac{1}{\max|\Dt|}\sum_\Dt\left(\int_\Dt|\nabla\cdot u_\epsilon^h|^2\ dx\right)^2\leq\frac{TOL^2}{4\nu|\Omega|}\|f\|_{-1}^2,\\
    \sum_\Dt\left(\int_\Dt|\nabla\cdot u_\epsilon^h|^2\ dx\right)^2\leq\frac{\max|\Dt|}{|\Omega|}\frac{TOL^2}{4\nu}\|f\|_{-1}^2.
\end{align*}
Using the Cauchy Schwartz inequality:
\begin{align*}
    \frac{1}{N}\left(\sum_\Dt\int_\Dt|\nabla\cdot u^h_\epsilon|^2\ dx\right)^2\leq\sum_\Dt\left(\int_\Dt|\nabla\cdot u^h_\epsilon|^2\ dx\right)^2\leq\frac{\max|\Dt|}{|\Omega|}\frac{TOL^2}{4\nu}\|f\|_{-1}^2.
\end{align*}
Then the result follows.
\end{proof}

Next, we consider the pointwise penalty (PP) method \eqref{vform}. Recall that $LocTol_\Dt=\frac{1}{2}TOL^2\frac{|\Dt|}{|\Omega|}$.
\begin{theorem}
Suppose $\mathcal{T}^h$ be a mesh of $\Omega$ and $\Dt$ denote the mesh element in $\mathcal{T}^h$, the solution to \eqref{vform} is stable, and the following stability bound holds
\begin{equation*}
    \frac{\nu}{2}\|\nabla u_\epsilon^h\|^2+\sum_\Dt\frac{1}{LocTol_\Dt}\|\nabla\cdot u_\epsilon^h\|_{L^4(\Dt)}^4\leq\frac{1}{2\nu}\|f\|_{-1}^2.
\end{equation*}
\end{theorem}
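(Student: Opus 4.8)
The plan is to mirror the stability argument for the EP method in Theorem 3.1, testing with the solution itself. First I would set $v^h=u_\epsilon^h$ in the variational formulation \eqref{vform}. The diffusion term then yields $\nu\|\nabla u_\epsilon^h\|^2$, while in the nonlinear grad-div term the factor $\nabla\cdot v^h=\nabla\cdot u_\epsilon^h$ combines with $|\nabla\cdot u_\epsilon^h|^2\nabla\cdot u_\epsilon^h$ to give, on each element, the pointwise fourth power $|\nabla\cdot u_\epsilon^h|^4$. Integrating, the nonlinear contribution is exactly $\sum_\Dt LocTol_\Dt^{-1}\|\nabla\cdot u_\epsilon^h\|_{L^4(\Dt)}^4$. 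This is the one place where PP differs from EP: the elementwise $L^2$ fourth power $\|\nabla\cdot u_\epsilon^h\|_\Dt^4$ appearing in Theorem 3.1 is here replaced by the $L^4$ fourth power, a direct consequence of choosing $\epsilon$ pointwise rather than elementwise.

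Next I would bound the right-hand side. Using the definition of the dual norm, $(f,u_\epsilon^h)\leq\|f\|_{-1}\|\nabla u_\epsilon^h\|$, followed by Young's inequality \eqref{holder+young} with $p=q=2$ and $\delta=\nu$, the linear functional splits as $\frac{1}{2\nu}\|f\|_{-1}^2+\frac{\nu}{2}\|\nabla u_\epsilon^h\|^2$. Moving the $\frac{\nu}{2}\|\nabla u_\epsilon^h\|^2$ term to the left absorbs half of the diffusion term and leaves precisely the claimed inequality. This energy portion of the argument is routine and essentially identical to the EP case.

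The identity above presupposes a solution exists, so the genuine analytical content behind the word \emph{stable} is establishing existence of $u_\epsilon^h\in X^h$ solving \eqref{vform}; this is the step I expect to be the main obstacle. I would define $A\colon X^h\to (X^h)'$ by $\langle Au^h,v^h\rangle=\nu(\nabla u^h,\nabla v^h)+a(u^h,u^h,v^h)$, where $a(\cdot,\cdot,\cdot)$ is the form introduced just before Lemma 2.1. Lemma 2.2 shows that $A$ is strongly monotone through \eqref{monotone} and locally Lipschitz continuous through \eqref{llc}, hence continuous and coercive on the finite-dimensional space $X^h$. Existence of a solution then follows from the standard monotone-operator (Browder–Minty / Brouwer) argument, exactly as for the $p$-Laplacian treated in Barrett and Liu. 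The point I would watch most carefully is that coercivity in the natural $L^4$-type energy dominates the linear functional $(f,\cdot)$; the a priori bound derived in the first two paragraphs is precisely the quantitative form of that coercivity, so the existence and stability parts fit together without additional estimates.
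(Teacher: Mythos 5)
Your proof is correct and follows the same route as the paper: test with $v^h=u_\epsilon^h$ so the nonlinear term produces $\sum_\Dt LocTol_\Dt^{-1}\|\nabla\cdot u_\epsilon^h\|_{L^4(\Dt)}^4$, then bound $(f,u_\epsilon^h)$ by the dual norm and Young's inequality and absorb $\frac{\nu}{2}\|\nabla u_\epsilon^h\|^2$ into the left-hand side. Your additional paragraph on existence via strong monotonicity and the Browder--Minty argument goes beyond what the paper proves (it only establishes the a priori bound), but it is a sound and welcome supplement rather than a deviation.
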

\begin{proof}
Take $v^h=u_\epsilon^h$ in \eqref{vform}:
\begin{align*}
    \nu\|\nabla u_\epsilon^h\|^2+\sum_\Dt\frac{1}{LocTol_\Dt}\int_\Dt |\nabla\cdot u_\epsilon^h|^4\ dx=\int_\Omega f\cdot u_\epsilon^h\ dx,
\end{align*}
As $(f,u^h_\epsilon)\leq\|f\|_{-1}\|\nabla u^h_\epsilon\|$ and apply H\"{o}lder's and Young's inequalities \eqref{holder+young}:
\begin{align*}
    \nu\|\nabla u_\epsilon^h\|^2+\sum_\Dt\frac{1}{LocTol_\Dt}\|\nabla\cdot u_\epsilon^h\|_{L^4(\Dt)}^4\leq \frac{1}{2\nu}\|f\|_{-1}^2+\frac{\nu}{2}\|\nabla u_\epsilon^h\|^2.
\end{align*}
Combine similar terms and the claimed stability bound then follows.
\end{proof}\\
Directly from the result of Theorem 3.3, we have the following proposition.
\begin{proposition}
Let TOL denote the global tolerance, then the solution $u^h_\epsilon$ to \eqref{vform} satisfy
\begin{equation*}
    \|\nabla\cdot u^h_\epsilon\|^4_{L^4}\leq\left(\frac{\max|\Dt|}{4\nu|\Omega|}\right)TOL^2\|f\|_{-1}^2.
\end{equation*}
\end{proposition}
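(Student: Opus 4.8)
The plan is to read the bound off directly from Theorem 3.3 by discarding the information we do not need. Since the term $\frac{\nu}{2}\|\nabla u_\epsilon^h\|^2$ is nonnegative, Theorem 3.3 immediately yields
\begin{equation*}
    \sum_\Dt\frac{1}{LocTol_\Dt}\|\nabla\cdot u_\epsilon^h\|_{L^4(\Dt)}^4\leq\frac{1}{2\nu}\|f\|_{-1}^2 .
\end{equation*}
So the whole task reduces to turning the left-hand sum into a lower bound involving the global quantity $\|\nabla\cdot u_\epsilon^h\|_{L^4}^4$.

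Next I would substitute $LocTol_\Dt=\frac{1}{2}TOL^2\frac{|\Dt|}{|\Omega|}$ and bound each factor $|\Dt|^{-1}$ from below by $(\max|\Dt|)^{-1}$, which lets me pull the mesh-dependent constant out of the sum:
\begin{equation*}
    \frac{2|\Omega|}{TOL^2\,\max|\Dt|}\sum_\Dt\|\nabla\cdot u_\epsilon^h\|_{L^4(\Dt)}^4
    \leq\sum_\Dt\frac{2|\Omega|}{TOL^2\,|\Dt|}\|\nabla\cdot u_\epsilon^h\|_{L^4(\Dt)}^4
    \leq\frac{1}{2\nu}\|f\|_{-1}^2 .
\end{equation*}

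The key observation, and the point where this proposition is genuinely simpler than Proposition 3.2, is that the fourth power of the $L^4$ norm is additive over mesh elements:
\begin{equation*}
    \sum_\Dt\|\nabla\cdot u_\epsilon^h\|_{L^4(\Dt)}^4
    =\sum_\Dt\int_\Dt|\nabla\cdot u_\epsilon^h|^4\ dx
    =\int_\Omega|\nabla\cdot u_\epsilon^h|^4\ dx
    =\|\nabla\cdot u_\epsilon^h\|_{L^4}^4 .
\end{equation*}
In the elementwise case of Proposition 3.2 the corresponding local quantity is $\|\nabla\cdot u_\epsilon^h\|_\Dt^4=\left(\int_\Dt|\nabla\cdot u_\epsilon^h|^2\,dx\right)^2$, which is a square of an $L^2$ contribution and does \emph{not} sum additively, so a Cauchy--Schwarz step over the $N$ elements is forced there (producing the factor $N$). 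Here the natural local norm is already an $L^4$ norm to the fourth power, so no such step is needed, and in particular no factor of $N$ appears.

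Substituting this identity into the previous display and rearranging gives exactly
\begin{equation*}
    \|\nabla\cdot u_\epsilon^h\|_{L^4}^4\leq\left(\frac{\max|\Dt|}{4\nu|\Omega|}\right)TOL^2\|f\|_{-1}^2,
\end{equation*}
as claimed. I expect no real obstacle beyond the bookkeeping with $LocTol_\Dt$; the only point demanding care is the direction of the inequality when replacing $|\Dt|^{-1}$ by $(\max|\Dt|)^{-1}$, which is valid precisely because every summand is nonnegative.
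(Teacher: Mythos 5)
Your proof is correct and follows essentially the same route as the paper: drop the nonnegative gradient term from Theorem 3.3, substitute $LocTol_\Dt=\frac{1}{2}TOL^2\frac{|\Dt|}{|\Omega|}$, bound $|\Dt|^{-1}$ below by $(\max|\Dt|)^{-1}$, and use the additivity of $\|\cdot\|_{L^4(\Dt)}^4$ over elements. Your aside explaining why no factor of $N$ (and no Cauchy--Schwarz over elements) is needed here, unlike in Proposition 3.2, is accurate and a nice clarification of what the paper leaves implicit.
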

\begin{proof}
From Theorem 3.3, there holds
\begin{equation*}
    \sum_\Dt\frac{1}{LocTol_\Dt}\|\nabla\cdot u^h_\epsilon\|^4_{L^4(\Dt)}\leq \frac{1}{2\nu}\|f\|^2_{-1}.
\end{equation*}
Recall $LocTol_\Dt=\frac{1}{2}TOL^2\frac{|\Dt|}{|\Omega|}$,
\begin{align*}
    \sum_\Dt \frac{2}{TOL^2}\frac{|\Omega|}{|\Dt|}\|\nabla\cdot u^h_\epsilon\|^4_{L^4(\Dt)}\leq\frac{1}{2\nu}\|f\|_{-1}^2,\\
    \sum_\Dt\frac{1}{|\Dt|}\|\nabla\cdot u^h_\epsilon\|^4_{L^4(\Dt)}\leq\frac{TOL^2}{4\nu|\Omega|}\|f\|_{-1}^2,\\
    \frac{1}{\max|\Dt|}\sum_\Dt\left(\int_\Dt|\nabla\cdot u^h_\epsilon|^4\ dx\right)^1\leq\frac{TOL^2}{4\nu|\Omega|}\|f\|_{-1}^2.
\end{align*}
Then the result follows.
\end{proof}
\subsection{Error analysis} 
We consider the error between continuous Stokes problem \eqref{stokes} and discretized pointwise penalized (PP) Stokes problem \eqref{vform}. Recall $Q:=\{q\in L^2(\Omega):\int_\Omega q\ dx=0\}$. The variational form of the Stokes problem \eqref{stokes} is: 

Find $(u,p)\in (X,Q)$ such that
\begin{equation}\label{v-stokes}
\begin{aligned}
    \int_\Omega\nu\nabla u:\nabla v\ dx-\int_\Omega p(\nabla\cdot v)\ dx&=\int_\Omega f\cdot v\ dx\quad\text{for all}\ v\in X, \\
    \text{and}\quad\int_\Omega (\nabla\cdot u) q\ dx&=0\quad\text{for all}\ q\in Q.
\end{aligned}
\end{equation}
\begin{theorem}
Let $(u,p)$ be a solution to the Stokes problem \eqref{v-stokes} and $u_\epsilon^h$ be the solution of the penalty approximation \eqref{vform}. Let d denote the dimension of $\Omega$ and $C_1, C_2$ be two constants defined as in \eqref{monotone} and \eqref{llc}. TOL denote the global tolerance and $LocTol_\Dt$ be the local tolerance for each element $\Dt$ in mesh $\mathcal{T}^h$. Then it follows that
\begin{align*}
   \nu\|\nabla (u-u^h_\epsilon)\|^2+C_1\sum_\Dt LocTol_\Dt^{-1}\|\nabla\cdot (u-u^h_\epsilon)\|_{L^4(\Dt)}^4\leq\inf_{v^h\in X^h}C(C_1,C_2)\sum_\Dt LocTol_\Dt^{-1}
    \|\nabla\cdot(u-v^h)\|^4_{L^4(\Dt)}\\ +C(\nu)h^{2m-2}\|u\|_{H^{m+1}(\Omega)}^2
    +h^2\|p\|^2+C\nu^{-1/4}\|f\|^{1/2}_{-1}TOL^{1/2}(\max |\Dt|)^{1/4}\|p\|^2.
\end{align*}
\end{theorem}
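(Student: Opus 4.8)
The plan is to follow the standard monotone-operator route, adapted to the elementwise-weighted $L^4$ structure carried by the trilinear form $a(\cdot,\cdot,\cdot)$. First I would derive an error equation. Testing the continuous problem \eqref{v-stokes} against a discrete function $v^h\in X^h$ and using that $\nabla\cdot u=0$ (so that $a(u,u,v^h)=0$ and $(p,\nabla\cdot u)=0$), then subtracting the discrete equation \eqref{vform}, I obtain for every $w^h\in X^h$
\[
\nu(\nabla(u-u_\epsilon^h),\nabla w^h)+\big[a(u,u,w^h)-a(u_\epsilon^h,u_\epsilon^h,w^h)\big]=(p,\nabla\cdot w^h).
\]
I would split the error as $u-u_\epsilon^h=\eta+\phi^h$ with $\eta:=u-v^h$ and $\phi^h:=v^h-u_\epsilon^h\in X^h$, and take $w^h=\phi^h$.

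Second, I would extract the coercive left-hand side. The viscous term gives $\nu\|\nabla\phi^h\|^2$ once the cross term $\nu(\nabla\eta,\nabla\phi^h)$ is moved to the right. For the nonlinear part I write $\phi^h=(u-u_\epsilon^h)-\eta$ and split $a(u,u,\phi^h)-a(u_\epsilon^h,u_\epsilon^h,\phi^h)$ into the piece tested against $\nabla\cdot(u-u_\epsilon^h)$, bounded below by Strong Monotonicity \eqref{monotone} as $C_1\sum_\Dt LocTol_\Dt^{-1}\|\nabla\cdot(u-u_\epsilon^h)\|_{L^4(\Dt)}^4$, and the piece tested against $\nabla\cdot\eta$, which is estimated from above by Local Lipschitz continuity \eqref{llc}. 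Since $\nabla\cdot u=0$ here, the radius in \eqref{llc} reduces to $r_\Dt=\|\nabla\cdot u_\epsilon^h\|_{L^4(\Dt)}$, which is \emph{not} small but is controlled a priori by the stability bound of Theorem 3.3 and Proposition 3.4, giving $\|\nabla\cdot u_\epsilon^h\|_{L^4}\le C\nu^{-1/4}TOL^{1/2}\|f\|_{-1}^{1/2}(\max|\Dt|)^{1/4}$.

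Third, I would bound the right-hand side. The viscous cross term is handled by Cauchy--Schwarz and Young, contributing $\|\nabla\eta\|^2$, which the approximation property \eqref{approx-prop} converts into the $\|u\|_{H^{m+1}}$ term. The nonlinear cross term $C_2\sum_\Dt LocTol_\Dt^{-1}r_\Dt^2\|\nabla\cdot(u-u_\epsilon^h)\|_{L^4(\Dt)}\|\nabla\cdot\eta\|_{L^4(\Dt)}$ is treated with the weighted Young inequality \eqref{holder+young} at exponents $4$ and $4/3$: this splits off a multiple of $\sum_\Dt LocTol_\Dt^{-1}\|\nabla\cdot(u-u_\epsilon^h)\|^4_{L^4(\Dt)}$, to be absorbed into the monotone left side provided its coefficient is a small fraction of $C_1$, and leaves the $\inf_{v^h}\sum_\Dt LocTol_\Dt^{-1}\|\nabla\cdot(u-v^h)\|^4_{L^4(\Dt)}$ term. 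The pressure term I write as $(p,\nabla\cdot\phi^h)=-(p,\nabla\cdot u_\epsilon^h)-(p,\nabla\cdot\eta)$: the first piece is bounded by H\"older's inequality (pairing $L^{4/3}$ with $L^4$) together with the Proposition 3.4 bound on $\|\nabla\cdot u_\epsilon^h\|_{L^4}$, producing the final $\nu^{-1/4}\|f\|^{1/2}_{-1}TOL^{1/2}(\max|\Dt|)^{1/4}\|p\|^2$ term; the second piece, via Cauchy--Schwarz, Young and \eqref{approx-prop} (passing between $L^4$, $L^2$ and $H^1$ norms on $X^h$ by an inverse estimate where needed), yields the remaining $h^2\|p\|^2$ and $h^{2m-2}\|u\|^2_{H^{m+1}}$ contributions.

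Finally, I would collect the absorbed terms on the left, apply the triangle inequality $\|\nabla(u-u_\epsilon^h)\|^2\le 2\|\nabla\phi^h\|^2+2\|\nabla\eta\|^2$ to convert $\nu\|\nabla\phi^h\|^2$ into the stated $\nu\|\nabla(u-u_\epsilon^h)\|^2$, and retain the weighted-$L^4$ approximation term as an infimum over $v^h\in X^h$ while bounding the $\eta$-dependent remainders by \eqref{approx-prop}. The main obstacle is the interplay of three features absent from the linear theory: there is no discrete inf--sup structure to annihilate the pressure term, so it must be controlled entirely through the penalty/stability estimate; the Lipschitz factor $r_\Dt^2$ is $O(1)$ rather than small and must be dominated using the a priori bound on $\|\nabla\cdot u_\epsilon^h\|_{L^4}$; and the entire absorption has to be carried out in the elementwise $L^4$ norm with Young exponents $4$ and $4/3$, so that exactly a controllable fraction of the monotone quartic term is generated on the right. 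Tracking the $LocTol_\Dt$ weights and the constants $C_1,C_2$ through this balancing is the delicate bookkeeping that makes or breaks the argument.
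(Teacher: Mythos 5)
Your overall architecture---error equation from the consistency $a(u,u,\cdot)=0$, strong monotonicity for coercivity, local Lipschitz continuity for the consistency term, Young's inequality at exponents $4$ and $4/3$ for absorption, and the pressure term split as $(p,\nabla\cdot u_\epsilon^h)+(p,\nabla\cdot(u-v^h))$ with the first piece controlled through the stability bound---is exactly the paper's. The one place where your plan genuinely diverges, and where it would fail to deliver the theorem \emph{as stated}, is the nonlinear consistency term. You apply the Lipschitz estimate \eqref{llc} to the pair $(u,u_\epsilon^h)$, so the radius is $r_\Dt=\|\nabla\cdot u_\epsilon^h\|_{L^4(\Dt)}$, and you propose to dominate it by the $O(1)$ a priori bound from Theorem 3.3 / Proposition 3.4. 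After Young's inequality this leaves a remainder proportional to $\sum_\Dt LocTol_\Dt^{-1}r_\Dt^{8/3}\|\nabla\cdot(u-v^h)\|_{L^4(\Dt)}^{4/3}$; with $r_\Dt$ treated as a constant this is a $4/3$ power of the approximation error, not the quartic power $\sum_\Dt LocTol_\Dt^{-1}\|\nabla\cdot(u-v^h)\|_{L^4(\Dt)}^{4}$ claimed in the statement. The paper avoids this by comparing $a(u,u,\cdot)$ with $a(\tilde u,\tilde u,\cdot)$ for an interpolant $\tilde u\in X^h$ (and, correspondingly, applying monotonicity to the pair $(u_\epsilon^h,\tilde u)$, so the coercive term is first obtained in $\phi^h=u_\epsilon^h-\tilde u$ rather than directly in $e$): since $\nabla\cdot u=0$, the radius becomes $r_\Dt=\|\nabla\cdot\tilde u\|_{L^4(\Dt)}=\|\nabla\cdot(u-\tilde u)\|_{L^4(\Dt)}$, i.e.\ it is itself an approximation error, and $r_\Dt^{8/3}\|\nabla\cdot(u-\tilde u)\|_{L^4(\Dt)}^{4/3}$ collapses to the stated quartic term.

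If you prefer to keep your decomposition (monotonicity applied directly to $e=u-u_\epsilon^h$), the repair is to observe that, again because $\nabla\cdot u=0$, one has $r_\Dt=\|\nabla\cdot u_\epsilon^h\|_{L^4(\Dt)}=\|\nabla\cdot e\|_{L^4(\Dt)}$, so the consistency term is bounded by $C_2\sum_\Dt LocTol_\Dt^{-1}\|\nabla\cdot e\|_{L^4(\Dt)}^{3}\|\nabla\cdot(u-v^h)\|_{L^4(\Dt)}$; Young's inequality with exponents $4/3$ and $4$ then absorbs the cubic factor into the monotone term and leaves exactly the quartic approximation term. Treating $r_\Dt$ as an $O(1)$ quantity, as you propose, proves a true but different (and, in the divergence term, lower-order) estimate. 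The rest of your plan---the pressure splitting, the $\frac{h^2}{2}\|p\|^2+\frac{1}{2h^2}\|\nabla\cdot(u-v^h)\|^2$ Young step, and the use of the stability bound together with $\left(\sum_\Dt|\Dt|\,LocTol_\Dt\right)^{1/4}\leq TOL^{1/2}\left(\max|\Dt|/2\right)^{1/4}$---matches the paper.
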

\begin{remark}
 If $X^h$ has a divergence free subspace with good approximation properties, the first term of the RHS of the estimate in Theorem 3.5 vanishes.
\end{remark}
\begin{proof}
As $a(u,u,v^h)=\sum_\Dt LocTol_\Dt^{-1}\int_\Dt|\nabla\cdot u|^2\nabla\cdot u\nabla\cdot v^h\ dx$ and $\nabla\cdot u=0$, so $a(u,u,v^h)=0$. From \eqref{v-stokes}, adding $a(u,u,v)$ to the left-hand-side :
\begin{equation*}
    \nu(\nabla u,\nabla v)-(p,\nabla\cdot v)+a(u,u,v)=(f,v),\quad\forall\ v\in X.
\end{equation*}
Subtract \eqref{vform} and let $v=v^h$:
\begin{equation*}
    \nu(\nabla(u-u_\epsilon^h),\nabla v^h)+a(u,u,v^h)-a(u_\epsilon^h,u_\epsilon^h,v^h)=(p,\nabla\cdot v^h).
\end{equation*}
Denote $e=u-u^h_\epsilon$, let $\forall\ \Tilde{u}\in X^h, \eta=u-\Tilde{u}$ and $\phi^h=u^h_\epsilon-\Tilde{u}$, then $e=\eta-\phi^h$, the error equation becomes:
\begin{align*}
    \nu(\nabla\eta,\nabla v^h)+a(u,u,v^h)-a(\Tilde{u},\Tilde{u},v^h)\\
    =\nu(\nabla\phi^h,\nabla v^h)+a(u_\epsilon^h,u_\epsilon^h,v^h)-a(\Tilde{u},\Tilde{u},v^h)+(p,\nabla\cdot v^h),
\end{align*}
Letting $v^h=\phi^h$, the error equation becomes:
\begin{equation*}
    \nu(\nabla\phi^h,\nabla \phi^h)+a(u^h_\epsilon,u^h_\epsilon,\phi^h)-a(\Tilde{u},\Tilde{u},\phi^h)=\nu(\nabla\eta,\nabla\phi^h)+a(u,u,\phi^h)-a(\Tilde{u},\Tilde{u},\phi^h)-(p,\nabla\cdot\phi^h).
\end{equation*}
Apply Strong Monotonicity \eqref{monotone} to $a(u^h_\epsilon,u^h_\epsilon,\phi^h)-a(\Tilde{u},\Tilde{u},\phi^h)$:
\begin{align*}
    &a(u^h_\epsilon,u^h_\epsilon,\phi^h)-a(\Tilde{u},\Tilde{u},\phi^h)\\
    &=\sum_\Dt\frac{1}{LocTol_\Dt}\int_\Dt(|\nabla\cdot u^h_\epsilon|^2\nabla\cdot u^h_\epsilon-|\nabla\cdot\Tilde{u}|^2\nabla\cdot\Tilde{u})\nabla\cdot(u^h_\epsilon-\Tilde{u})\ dx\\
    &\geq \sum_\Dt\frac{1}{LocTol_\Dt}C_1\int_\Dt|\nabla\cdot(u^h_\epsilon-\Tilde{u})|^4\ dx.
\end{align*}
Apply Local Lipschitz continuity \eqref{llc} to $a(u,u,\phi^h)-a(\Tilde{u},\Tilde{u},\phi^h)$:
\begin{align*}
   &a(u,u,\phi^h)-a(\Tilde{u},\Tilde{u},\phi^h)\\
   &=\sum_\Dt\frac{1}{LocTol_\Dt}\int_\Dt (|\nabla\cdot u|^2\nabla\cdot u-|\nabla\cdot\Tilde{u}|^2\nabla\cdot\Tilde{u})\nabla\cdot\phi^h\ dx\\
   &\leq\sum_\Dt\frac{1}{LocTol_\Dt} C_2 r_\Dt^2\left(\int_\Dt |\nabla\cdot(u-\Tilde{u})|^4\ dx\right)^{1/4}\left(\int_\Dt |\nabla\cdot\phi^h|^4\right)^{1/4}\\
   &\text{where}\ r_\Dt=\max\{\|\nabla\cdot u\|_{L^4(\Dt)},\|\nabla\cdot\Tilde{u}\|_{L^4(\Dt)}\}=\|\nabla\cdot\Tilde{u}\|_{L^4(\Dt)}.
\end{align*}
Then the error equation becomes
\begin{align*}
    \nu\|\nabla\phi^h\|^2+\sum_\Dt\frac{C_1}{LocTol_\Dt}\|\nabla\cdot\phi^h\|_{L^4(\Dt)}^4\leq\nu(\nabla\eta,\nabla\phi^h)\\
    +\sum_\Dt\frac{C_2r_\Dt^2}{LocTol_\Dt}\|\nabla\cdot\eta\|_{L^4(\Dt)}\|\nabla\cdot\phi^h\|_{L^4(\Dt)}-(p,\nabla\cdot\phi^h).
\end{align*}
Apply H\"{o}lder's and Young's inequality \eqref{holder+young} with $p=4, q=4/3$:
\begin{align*}
    &\nu\|\nabla\phi^h\|^2+\sum_\Dt\frac{C_1}{LocTol_\Dt}\|\nabla\cdot\phi^h\|_{L^4(\Dt)}^4\\
    &\leq\frac{\nu}{2}\|\nabla\eta\|^2+\frac{\nu}{2}\|\nabla\phi^h\|^2+\sum_\Dt\left(\frac{C_1^{1/4}}{LocTol_\Dt^{1/4}}\|\nabla\cdot\phi^h\|_{L^4(\Dt)}\right)\left(\frac{C_2r_\Dt^2}{C_1^{1/4}LocTol_\Dt^{3/4}}\|\nabla\cdot\eta\|_{L^4(\Dt)}\right)-(p,\nabla\cdot\phi^h)\\
    &\leq\frac{\nu}{2}\|\nabla\eta\|^2+\frac{\nu}{2}\|\nabla\phi^h\|^2-(p,\nabla\cdot\phi^h)\\
    &+\left(\sum_\Dt\frac{C_1}{LocTol_\Dt}\|\nabla\cdot\phi^h\|_{L^4(\Dt)}^4\right)^{1/4}\left(\sum_\Dt\frac{C_2^{4/3}r_\Dt^{8/3}}{C_1^{1/3}LocTol_\Dt}\|\nabla\cdot\eta\|_{L^4(\Dt)}^{4/3}\right)^{3/4}\\
    &\leq \frac{\nu}{2}\|\nabla\eta\|^2+\frac{\nu}{2}\|\nabla\phi^h\|^2-(p,\nabla\cdot\phi^h)\\
    &+\frac{\delta}{4}\left(\sum_\Dt\frac{C_1}{LocTol_\Dt}\|\nabla\cdot\phi^h\|_{L^4(\Dt)}^4\right)^1+\frac{\delta^{-1/3}}{4/3}\left(\sum_\Dt\frac{C_2^{4/3}r_\Dt^{8/3}}{C_1^{1/3}LocTol_\Dt}\|\nabla\cdot\eta\|_{L^4(\Dt)}^{4/3}\right)^1.
\end{align*}
Letting $\delta=2$ and combining similar terms gives
\begin{equation*}
    \frac{\nu}{2}\|\nabla\phi^h\|^2+\frac{1}{2}\sum_\Dt\frac{C_1}{LocTol_\Dt}\|\nabla\cdot\phi^h\|_{L^4(\Dt)}^4\leq\frac{\nu}{2}\|\nabla\eta\|^2+\frac{3}{4\sqrt[3]{2}}\sum_\Dt\frac{C_2^{4/3}r_\Dt^{8/3}}{C_1^{1/3}LocTol_\Dt}\|\nabla\cdot\eta\|_{L^4(\Dt)}^{4/3}   -(p,\nabla\cdot\phi^h).
\end{equation*}
Consider the last term of the error equation inspired by the proof of Falk \cite{falk1976finite}:
\begin{align*}
    (p,\nabla\cdot\phi^h)&=(p,\nabla\cdot(u^h_\epsilon-\Tilde{u}))\\
    &=(p,\nabla\cdot u^h_\epsilon)+(p,\nabla\cdot (u-\Tilde{u}))\\
    &\leq\sum_\Dt\int_\Dt p\nabla\cdot u^h_\epsilon\ dx+\frac{h^2}{2}\|p\|^2+\frac{1}{2h^2}\|\nabla\cdot\eta\|^2\\
    &\leq \sum_\Dt\int_\Dt \frac{1}{LocTol_\Dt^{1/4}}|\nabla\cdot u^h_\epsilon| LocTol_\Dt^{1/4}|p|\ dx+\frac{h^2}{2}\|p\|^2+\frac{1}{2h^2}\|\nabla\cdot\eta\|^2\\
    &\leq\sum_\Dt\left(\int_\Dt\frac{1}{LocTol_\Dt}|\nabla\cdot u^h_\epsilon|^4\ dx\right)^{1/4}\left(\int_\Dt LocTol_\Dt^{1/3}|p|^{4/3}\ dx\right)^{3/4}+\frac{h^2}{2}\|p\|^2+\frac{1}{2h^2}\|\nabla\cdot\eta\|^2\\
    &\leq\left(\sum_\Dt\int_\Dt\frac{1}{LocTol_\Dt}|\nabla\cdot u^h_\epsilon|^4\ dx\right)^{1/4}\left(\sum_\Dt\int_\Dt LocTol_\Dt^{1/3}|p|^{4/3}\ dx\right)^{3/4}+\frac{h^2}{2}\|p\|^2+\frac{1}{2h^2}\|\nabla\cdot\eta\|^2\\
    &\leq\left(\sum_\Dt\frac{1}{LocTol_\Dt}\|\nabla\cdot u^h_\epsilon\|^4_{L^4(\Dt)}\right)^{1/4}\left(\sum_\Dt\left(\int_\Dt LocTol_\Dt^{1}\right)^{1/3}\left(\int_\Dt |p|^2\right)^{2/3}\right)^{3/4}\\
    &+\frac{h^2}{2}\|p\|^2+\frac{1}{2h^2}\|\nabla\cdot\eta\|^2\\
    &\leq\left(\sum_\Dt\frac{1}{LocTol_\Dt}\|\nabla\cdot u^h_\epsilon\|^4_{L^4(\Dt)}\right)^{1/4}\left(\left(\sum_\Dt\int_\Dt LocTol_\Dt\right)^{1/3}\left(\sum_\Dt\int_\Dt|p|^2\right)^{2/3}\right)^{3/4}\\
    &+\frac{h^2}{2}\|p\|^2+\frac{1}{2h^2}\|\nabla\cdot\eta\|^2\\
    &=\left(\sum_\Dt\frac{1}{LocTol_\Dt}\|\nabla\cdot u^h_\epsilon\|_{L^4(\Dt)}^4\right)^{1/4}\left(\sum_\Dt|\Dt|LocTol_\Dt\right)^{1/4}\|p\|+\frac{h^2}{2}\|p\|^2+\frac{1}{2h^2}\|\nabla\cdot\eta\|^2.
\end{align*}
By the stability bound,
\begin{equation*}
    \frac{\nu}{2}\|\nabla u_\epsilon^h\|^2+\sum_\Dt\frac{1}{LocTol_\Dt}\|\nabla\cdot u_\epsilon^h\|_{L^4(\Dt)}^4\leq\frac{1}{2\nu}\|f\|_{-1}^2.
\end{equation*}
Thus,
\begin{equation*}
    (p,\nabla\cdot\phi^h)\leq C\nu^{-1/4}\|f\|_{-1}^{1/2}\left(\sum_\Dt|\Dt|LocTol_\Dt\right)^{1/4}\|p\|+\frac{h^2}{2}\|p\|^2+\frac{1}{2h^2}\|\nabla\cdot\eta\|^2.
\end{equation*}
Plug back to the error equation:
\begin{align*}
    \frac{\nu}{2}\|\nabla\phi^h\|^2+\sum_\Dt\frac{C_1}{2LocTol_\Dt}\|\nabla\cdot\phi^h\|_{L^4(\Dt)}^4\leq\frac{\nu}{2}\|\nabla\eta\|^2+\sum_\Dt\frac{3C_2^{4/3}r_\Dt^{8/3}}{4\sqrt[3]{2}C_1^{1/3}LocTol_\Dt}\|\nabla\cdot\eta\|_{L^4(\Dt)}^{4/3}\\
    +C\nu^{-1/4}\|f\|_{-1}^{1/2}\left(\sum_\Dt|\Dt|LocTol_\Dt\right)^{1/4}\|p\|+\frac{h^2}{2}\|p\|^2+\frac{1}{2h^2}\|\nabla\cdot\eta\|^2,
\end{align*}
where
\begin{align*}
    \left(\sum_\Dt|\Dt|LocTol_\Dt\right)^{1/4}:=\left(\sum_\Dt|\Dt|\frac{1}{2}\frac{TOL^2}{|\Omega|}|\Dt|\right)^{1/4}=\left(\frac{TOL^2}{2|\Omega|}\sum_\Dt|\Dt|^2\right)^{1/4}\\
    \leq \frac{TOL^{1/2}}{2^{1/4}}\left(\frac{1}{|\Omega|}\max|\Dt|\sum_\Dt|\Dt|\right)^{1/4}=TOL^{1/2}\left(\frac{\max |\Dt|}{2}\right)^{1/4}.
\end{align*}
Apply triangle inequality: $\|e\|\leq\|\eta\|+\|\phi^h\|$
\begin{align*}
    \nu\|\nabla e\|^2+\sum_\Dt\frac{C_1}{LocTol_\Dt}\|\nabla\cdot e\|_{L^4(\Dt)}^4\leq \inf_{v^h\in X^h}\Big\{\nu\|\nabla(u-v^h)\|^2+h^{-2}\|\nabla\cdot(u-v^h)\|^2\\
    +C(C_1,C_2)\sum_\Dt LocTol_\Dt^{-1}\left( r_\Dt^{8/3}\|\nabla\cdot(u-v^h)\|_{L^4(\Dt)}^{4/3}+\|\nabla\cdot(u-v^h)\|_{L^4(\Dt)}^4\right)
    \Big\}\\
    +h^2\|p\|^2+C\nu^{-1/4}\|f\|^{1/2}_{-1}TOL^{1/2}(\max |\Dt|)^{1/4}\|p\|^2,
\end{align*}
where
\begin{align*}
    \sum_\Dt LocTol_\Dt^{-1}r_\Dt^{8/3}\|\nabla\cdot(u-v^h)\|_{L^4(\Dt)}^{4/3}&=\sum_\Dt LocTol_\Dt^{-1}\|\nabla\cdot v^h\|_{L^4(\Dt)}^{8/3}\|\nabla\cdot(u-v^h)\|_{L^4(\Dt)}^{4/3}\\
    &=\sum_\Dt LocTol_\Dt^{-1}\|\nabla\cdot (u-v^h)\|_{L^4(\Dt)}^4.
\end{align*}
The error satisfies
\begin{align*}
    \nu\|\nabla e\|^2+\sum_\Dt\frac{C_1}{LocTol_\Dt}\|\nabla\cdot e\|_{L^4(\Dt)}^4\leq \inf_{v^h\in X^h}\Big\{\nu\|\nabla(u-v^h)\|^2+h^{-2}\|\nabla\cdot(u-v^h)\|^2\\
    +C(C_1,C_2)
    \sum_\Dt LocTol_\Dt^{-1}\|\nabla\cdot(u-v^h)\|_{L^4(\Dt)}^4
    \Big\}
    +h^2\|p\|^2+C\nu^{-1/4}\|f\|^{1/2}_{-1}TOL^{1/2}(\max |\Dt|)^{1/4}\|p\|^2.
\end{align*}
As $\|\nabla\cdot(u-v^h)\|\leq\|\nabla(u-v^h)\|$
\begin{align*}
    \nu\|\nabla e\|^2+\sum_\Dt\frac{C_1}{LocTol_\Dt}\|\nabla\cdot e\|_{L^4(\Dt)}^4\leq \inf_{v^h\in X^h}\Big\{C(\nu)(1+h^{-2})\|\nabla(u-v^h)\|^2\\
    +C(C_1,C_2)\sum_\Dt LocTol_\Dt^{-1}\|\nabla\cdot(u-v^h)\|_{L^4(\Dt)}^4
    \Big\}
    +h^2\|p\|^2+C\nu^{-1/4}\|f\|^{1/2}_{-1}TOL^{1/2}(\max |\Dt|)^{1/4}\|p\|^2.
\end{align*}
Using the approximation properties \eqref{approx-prop} of the spaces $X^h$
\begin{align*}
    \nu\|\nabla e\|^2+\sum_\Dt\frac{C_1}{LocTol_\Dt}\|\nabla\cdot e\|_{L^4(\Dt)}^4\leq \inf_{v^h\in X^h}C(C_1,C_2)\sum_\Dt LocTol_\Dt^{-1}\|\nabla\cdot(u-v^h)\|^4_{L^4(\Dt)}\\+C(\nu)h^{2m-2}\|u\|_{H^{m+1}(\Omega)}^2
    +h^2\|p\|^2++C\nu^{-1/4}\|f\|^{1/2}_{-1}TOL^{1/2}(\max |\Dt|)^{1/4}\|p\|^2.
\end{align*}
\end{proof}


\section{Algorithm}
This section presents the algorithms to implement the elementwise variable $\epsilon$ elementwise penalty (EP) method \eqref{vform2} introduced in Section 1. The following Algorithm 1 is for Stokes problem.
\begin{algorithm}[H]
\SetAlgoLined
Given tolerance TOL, epsilon lower bound LowerEps and mesh $\mathcal{T}$, MaxIter=10\\
\textit{Compute} on each element triangle LocTol$_\Dt =\frac{1}{2}\frac{TOL^2}{|\Omega|}|\Dt|$ \\
\textit{Set} $\epsilon_\Dt=1$ \\
Solve for $u^h$ using penalty method: find $u^h\in X^h$ such that
\begin{align*}
    \nu(\nabla u_\epsilon^h,\nabla v^h)+\sum_\Dt\int_\Dt \epsilon_\Dt^{-1}\nabla\cdot u_\epsilon^h\nabla\cdot v^h\ dx=(f,v^h)\quad \forall v^h\in X^h
\end{align*}
\While{iteration $\leq$ MaxIter and retry=true}{
Loop over all triangle elements $\Dt\in \mathcal{T}$ \\
\quad\quad\textit{Compute} estimator for each triangle
    \begin{equation*}
        est_\Dt=\int_\Dt |\nabla\cdot u_\epsilon^h|^2\ dx
    \end{equation*}
    \If{$est_\Dt>LocTol_\Dt$}{
        $r=\frac{LocTol_\Dt}{est_\Dt}$ \;
        $\epsilon_\Dt\gets \max(LowerEps, r\times\epsilon_\Dt)$\;
        retry=true;\\}
 REPEAT step;\\
}
Recover pressure $p$ if needed
\begin{align*}
     p_\Dt=-\frac{1}{\epsilon_\Dt}\nabla\cdot u^h_\epsilon
\end{align*}
\caption{Elementwise variable $\epsilon$ penalty (EP) method for Stokes}
\end{algorithm}
\begin{remark}
 We need to set a maximum number of iteration $MaxIter$ in the loop to avoid the program run infinitely. But this may lead to the situation that $est_\Dt\geq LocTol_\Dt$ local tolerance is not satisfied. However, our ultimate goal is $\|\nabla\cdot u^h\|<TOL$ no matter local tolerance is satisfied or not. 
\end{remark}

We also want to test the elementwise variable $\epsilon$ penalty method on the unsteady Navier-Stokes equation.
For time-dependent problem \eqref{nse}, there are two options:
\begin{enumerate}
    \item use $\|\nabla\cdot u_\epsilon^h\|_\Dt$ from the previous time step, adjust $\epsilon$ and do not repeat the current time-step,
    \item for each time-step, repeat using $\epsilon_{new}$ and loop until tolerance or maximum iteration is reached.
\end{enumerate}
Since this is a new algorithm, we do not know which is better. We still need to do further research, and Algorithm 2 follows.
\begin{algorithm}[h!]
\SetAlgoLined
Given tolerance TOL, epsilon lower bound LowerEps and mesh $\mathcal{T}^h$, final time $T_{final}$, time-step $\Dt t$, initial condition $u_0(x)$\\
\textit{Compute} on each element triangle LocTol$_\Dt =\frac{1}{2}\frac{TOL^2}{|\Omega|}|\Dt|$ \\
\textit{Set} $\epsilon_{\Dt,1}=1,t_0=0$; \\
\While{$t<T_{final}$}{
\textit{Update} $t_{n+1}=t_n+\Dt t$;\\
Given $u_{\epsilon,n}^h$, solve for $u_{\epsilon,n+1}^h$ using penalty method: find $u_{\epsilon,n+1}^h\in X^h$ such that
\begin{gather*}
    (\frac{u^h_{\epsilon,n+1}-u^h_{\epsilon,n}}{\Dt t},v^h)+(u_{\epsilon,n}^h\cdot\nabla u^h_{\epsilon,n+1},v^h)+\frac{1}{2}\left((\nabla\cdot u_{\epsilon,n}^h)u^h_{\epsilon,n+1},v^h\right)+\nu(\nabla u_{\epsilon,n+1}^h,\nabla v^h) \\
    +\sum_\Dt\int_\Dt \epsilon_{\Dt,n+1}^{-1}\nabla\cdot u_{\epsilon,n+1}^h\nabla\cdot v^h\ dx=(f^{n+1},v^h)\quad \forall v^h\in X^h
\end{gather*}
Loop over all triangle elements $\Dt\in \mathcal{T}^h$ \\
\quad\quad\textit{Compute} estimator for each triangle
    \begin{equation*}
        est_\Dt=\int_\Dt |\nabla\cdot u_{\epsilon,n+1}^h|^2\ dx
    \end{equation*}
\quad\quad\textit{Update} $\epsilon_\Dt$:
    \begin{align*}
        r&=\frac{LocTol_\Dt}{est_\Dt}, \\
        \epsilon_{\Dt,n+2}&\gets \max(LowerEps, r\times\epsilon_{\Dt,n+1}), \\
        \text{retry}&=\text{false};
    \end{align*}
\quad\quad\textit{Recover} pressure $p$ if needed
\begin{align*}
     p_{\Dt,n+1}=-\frac{1}{\epsilon_{\Dt,n+1}}\nabla\cdot u_{\epsilon,n+1}^h.
\end{align*}       
}
\caption{Elementwise variable $\epsilon$ penalty (EP) method for Navier Stokes}
\end{algorithm}

\section{Numerical Tests}\footnote{The datasets generated during and/or analysed during the current study are not publicly available but are available from the corresponding author on reasonable request.} In the numerical tests 5.1 and 5.2, the problems are tested using both elementwise penalty algorithm (Algorithm 1) and also this following coupled system: find $u^h\in X^h, p^h\in Q^h$ such that
\begin{equation}\label{eq:coupled}
\begin{split}
    \nu(\nabla u^h,\nabla v^h)-(p^h,\nabla\cdot v^h)=(f,v^h)\quad \forall v^h\in X^h,\\
    (\nabla\cdot u^h,q^h)=0\quad \forall q^h\in Q^h.
\end{split}
\end{equation}
\subsection{Test 1 taken from Burman and Hansbo \cite{burman2006edge}}
This model problem is constructed to test the convergence rate. The analytic solution is given below
\begin{equation}\label{test1}
\begin{aligned}
    u(x,y)&=20xy^3, \quad
    v(x,y)&=5x^4-5y^4, \quad
    p(x,y)&=60x^2y-20y^3-5.
\end{aligned}
\end{equation}
on $\Omega=(0,1)\times(0,1)$.
Inserting \eqref{test1} into Stokes equations \eqref{stokes} with $Re=100$ recovers the body force $f$. 

In this test, take $\epsilon$ lower bound $Lower Eps=10^{-8}$, global tolerance $TOL=10^{-5}$ and $LocTol_\Dt=\frac{1}{2}\frac{TOL^2}{|\Omega|}|\Dt|\approx1.5625\times10^{-11}$ for the case of 40 mesh points on each side. From Table \eqref{tab:test1-divu} with 40 mesh points per side: $\|\nabla\cdot u^h\|^2=5.49293\times10^{-6}<TOL$, the global tolerance condition satisfied using elementwise penalty. However from Figure \eqref{fig:triangle-divu40}(b): $\max\|\nabla\cdot u^h\|_\Dt^2\approx 1.15\times10^{-5}|\Dt|\approx3.59\times10^{-9}>LocTol$, the local condition does not satisfy but is very close to the local tolerance.

\begin{figure}[h]
  \subfigure[Coupled Stokes problem, the scale is about $10^{-1}$]{
	\begin{minipage}[c][1\width]{
	   0.45\textwidth}
	   \centering
	   \includegraphics[width=1\textwidth]{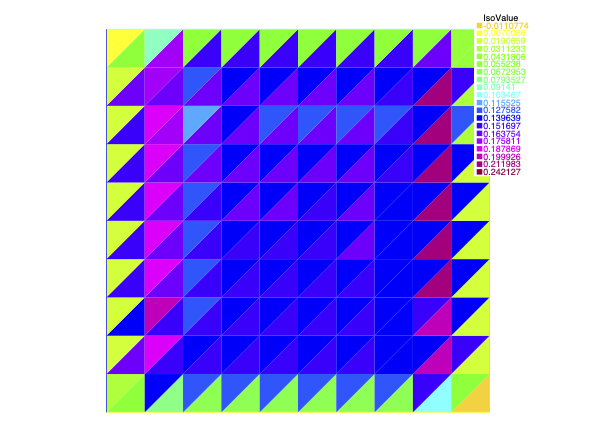}
	\end{minipage}
	}
 \hfill 	
  \subfigure[Elementwise penalty method (Algorithm1) for Stokes problem, the scale is about $10^{-3}$]{
	\begin{minipage}[c][1\width]{
	   0.45\textwidth}
	   \centering
	   \includegraphics[width=1\textwidth]{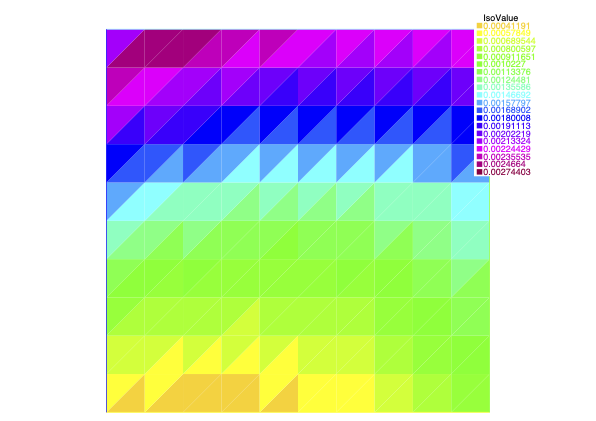}
	\end{minipage}
	}
\caption{$|\nabla\cdot u^h|_\Dt^2/|\Dt|$ with 10 mesh points on each side}
\label{fig:triangle-divu10}
\end{figure}

\begin{figure}[h]
  \subfigure[Coupled Stokes problem, the scale is about $10^{-3}$]{
	\begin{minipage}[c][1\width]{
	   0.45\textwidth}
	   \centering
	   \includegraphics[width=1\textwidth]{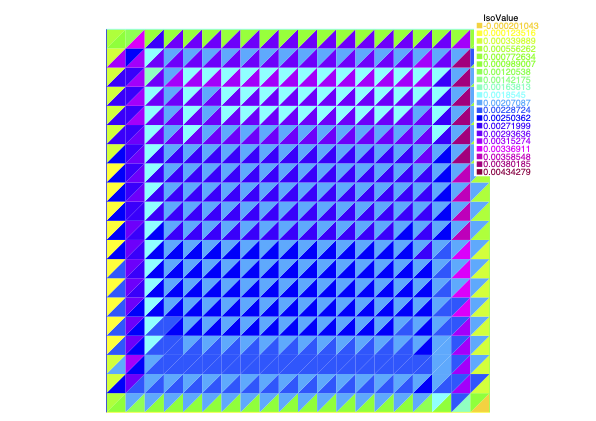}
	\end{minipage}
	}
 \hfill 	
  \subfigure[Elementwise penalty method (Algorithm1) for Stokes problem, the scale is about $10^{-4}$]{
	\begin{minipage}[c][1\width]{
	   0.45\textwidth}
	   \centering
	   \includegraphics[width=1\textwidth]{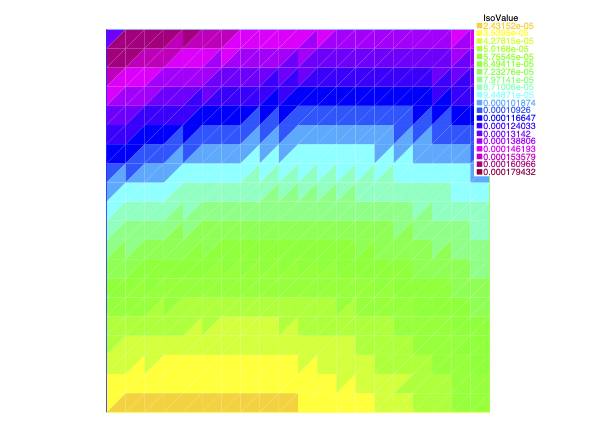}
	\end{minipage}
	}
\caption{$|\nabla\cdot u^h|_\Dt^2/|\Dt|$ with 20 mesh points on each side}
\label{fig:trianglle-divu20}
\end{figure}

\begin{figure}[h]
  \subfigure[Coupled Stokes problem, the scale is about $10^{-5}$]{
	\begin{minipage}[c][1\width]{
	   0.45\textwidth}
	   \centering
	   \includegraphics[width=1\textwidth]{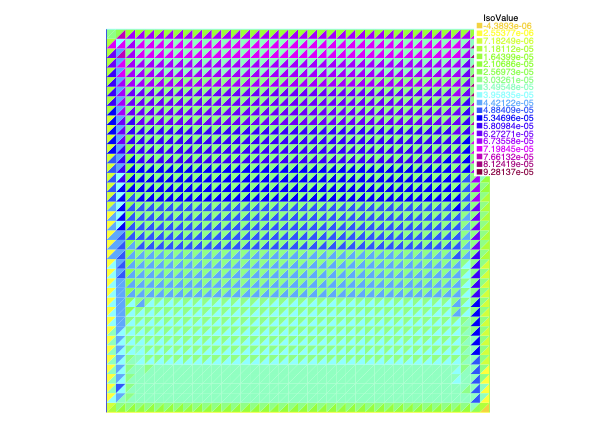}
	\end{minipage}
	}
 \hfill 	
  \subfigure[Elementwise penalty method (Algorithm1) for Stokes problem, the scale is about $10^{-6}$]{
	\begin{minipage}[c][1\width]{
	   0.45\textwidth}
	   \centering
	   \includegraphics[width=1\textwidth]{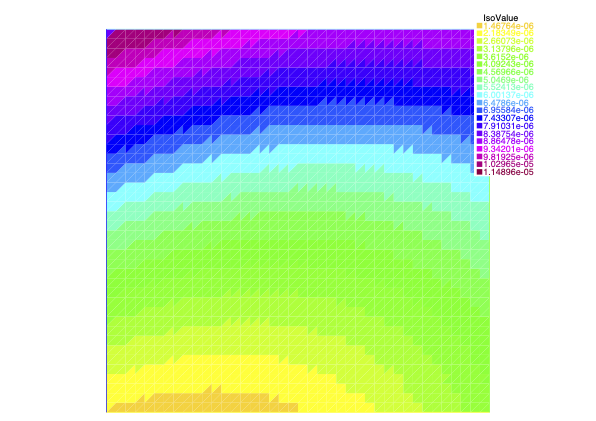}
	\end{minipage}
	}
\caption{$|\nabla\cdot u^h|_\Dt^2/|\Dt|$ with 40 mesh points on each side}
\label{fig:triangle-divu40}
\end{figure}

\begin{table}[H]
    \centering
    \begin{tabular}{||c|c|c|c||}
    \hline
     \# mesh points on each side    & coupled $\|u-u^h\|_{L^2}$ & penalty $\|u-u^h\|_{L^2}$ &rate \\
     \hline
    10     &0.00520688&0.00528456&- \\
    \hline
    20&0.000327941&0.00132306&1.99790\\
    \hline
    40&2.05561e-05&0.000340571& 1.95785
\\
    \hline
    \end{tabular}
    \caption{numerical error $\|u-u^h\|_{L^2}$ and convergence rate of elementwise penalty (compared with coupled system \eqref{eq:coupled})}
\end{table}
\begin{table}[H]
    \centering
    \begin{tabular}{||c|c|c|c||}
    \hline
     \# mesh points on each side    & coupled $\|\nabla(u-u^h)\|_{L^2}$ & penalty $\|\nabla(u-u^h)\|_{L^2}$ &rate \\
     \hline
    10     &0.384253&0.433158&- \\
    \hline
    20&0.0494622&0.21608&1.00333\\
    \hline
    40&0.0062691&0.107975& 1.00087\\
    \hline
    \end{tabular}
    \caption{numerical error $\|\nabla(u-u^h)\|_{L^2}$ and convergence rate of elementwise penalty (compared with coupled system \eqref{eq:coupled})}
\end{table}
\begin{table}[H]
    \centering
    \begin{tabular}{||c|c|c|c||}
    \hline
     \# mesh points on each side    & coupled $\|\nabla\cdot(u-u^h)\|_{L^4}^2$ & penalty $\|\nabla\cdot(u-u^h)\|_{L^4}^2$ &rate \\
     \hline
    10     &0.186365&0.00049467&- \\
    \hline
    20&0.00302458&3.12998e-05&3.98224\\
    \hline
    40&4.81016e-05&1.96239e-06& 3.99547\\
    \hline
    \end{tabular}
    \caption{numerical error $\|\nabla\cdot(u-u^h)\|_{L^4}^2$ and convergence rate of elementwise penalty (compared with coupled system \eqref{eq:coupled})}
\end{table}
Table (5.1)-(5.3) presents the numerical error of Test1 of comparison between coupled system \eqref{eq:coupled} and elementwise penalty method (Algorithm 1). The convergence rate of the elementwise penalty are also presented in the fourth column.
\begin{table}[h]
    \centering
    \begin{tabular}{||c|c|c||}
    \hline
       \# mesh points on each side  & coupled $\|\nabla\cdot u^h\|^2$ & penalty $\|\nabla\cdot u^h\|^2$ \\
       \hline
    10     & 0.135344&0.00140525 \\
    \hline
    20 &0.002331& 8.78752e-05\\
    \hline
    40 &4.23739e-05& 5.49293e-06\\
    \hline
    \end{tabular}
    \caption{$\|\nabla\cdot u^h\|^2$ numerical result of  Test1}
    \label{tab:test1-divu}
\end{table}

\subsection{Test 2 Flow between offset cylinders taken from Layton and McLaughlin \cite{layton2019doublyadaptive}} This test is to test Algorithm 1 on a more complex flow problem and also a comparison between the coupled system and elementwise penalty scheme.

The domain is a disk with a smaller off-center disk inside. Let $r_1=1, r_2=0.1, c_1=0.5$ and $c_2=0$, the domain is given by
\begin{equation*}
    \Omega=\{(x,y):x^2+y^2\leq r_1^2\ \text{and}\ (x-c_1)^2+(y-c_2)^2\geq r_2^2\}.
\end{equation*}
We take Re=100 and the body force is given by
\begin{equation*}
    f(x,y)=(-4y(1-x^2-y^2),4x(1-x^2-y^2)).
\end{equation*}
In this test, $\epsilon$ lower bound $Lower Eps=10^{-10}$ and global tolerance $TOL=10^{-6}$. There are 60 mesh points on the outer circle and 30 mesh points on the inner circle. The mesh is denser near the inner circle. And for this mesh the shortest edge of all triangles is $min_e h_e=0.0220132 $ and the longest edge $max_e h_e=0.141732$. The smallest area of element triangle $min_\Dt |\Dt|=0.000166354$ and the largest area of triangle $max_\Dt |\Dt|=0.00528893$. The local tolerance $LocTol_\Dt=\frac{1}{2}\frac{TOL^2}{|\Omega|}|\Dt|$ ranges from $10^{-16}$ to $10^{-17}$. 

In this test, from Table \eqref{tab:test2-divu}: $\|\nabla\cdot u^h\|^2=1.01872\times10^{-19}<TOL^2$ and from Figure \eqref{fig:test2-triangle-divu}(b): $\max\|\nabla\cdot u^h\|_\Dt^2\approx8.59\times10^{-17}|\Dt|\approx 10^{-20}<LocTol_\Dt$. Here local condition and global condition are both satisfied. 
\begin{table}[h]
    \centering
    \begin{tabular}{||c|c||}
    \hline
        method & $\|\nabla\cdot u^h\|^2$  \\
        \hline
        coupled & 0.255675\\
        \hline
        elementwise penalty & 1.01872e-19\\
        \hline
    \end{tabular}
    \caption{numerical result $\|\nabla\cdot u^h\|^2$ of Test2 Stokes problem}
    \label{tab:test2-divu}
\end{table}
In the test using elementwise penalty (Algorithm 1) at final iteration,  $\epsilon_{max}=2.92232*10^{-8}$ and $\epsilon_{min}=10^{-10}$. 
\begin{figure}[h]
  \subfigure[Coupled Stokes problem]{
	\begin{minipage}[c][1\width]{
	   0.45\textwidth}
	   \centering
	   \includegraphics[width=1\textwidth]{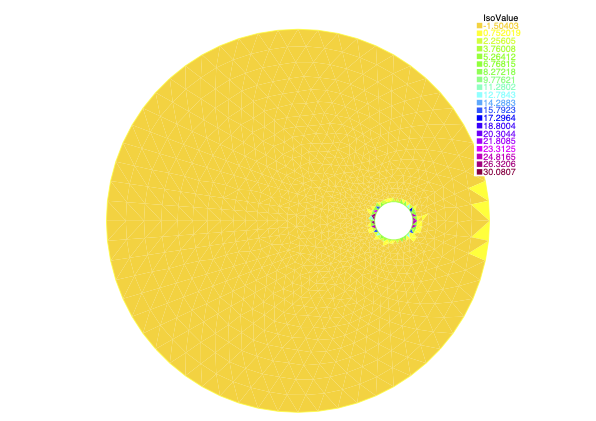}
	\end{minipage}
	}
 \hfill 	
  \subfigure[Elementwise penalty method (Algorithm1) for Stokes problem]{
	\begin{minipage}[c][1\width]{
	   0.45\textwidth}
	   \centering
	   \includegraphics[width=1\textwidth]{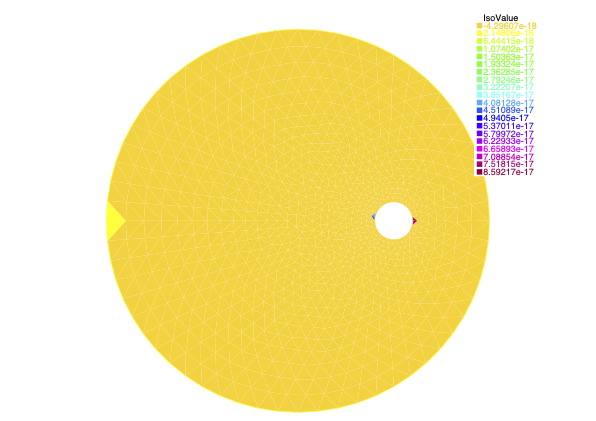}
	\end{minipage}
	}
\caption{$\|\nabla\cdot u^h\|_\Dt^2/|\Dt|$ of Test2, comparison between coupled \eqref{eq:coupled} and elementwise penalty system (Algorithm 1) (Note the scale in two plots are different. Coupled Stokes problem $\max_\Dt\|\nabla\cdot u^h\|_\Dt^2=O(10^2)$, elementwise penalty method $\max_\Dt \|\nabla\cdot u^h\|_\Dt^2=O(10^{-17})$)}
\label{fig:test2-triangle-divu}
\end{figure}
From Figure \eqref{fig:test2-triangle-divu}, the incompressibility condition is satisfied for the penalty method. For the coupled system $\max \|\nabla\cdot u^h\|_\Dt^2/|\Dt|\approx30.08$ which does not satisfy the incompressibility condition.
\begin{figure}[h]
  \subfigure[Coupled Stokes problem]{
	\begin{minipage}[c][1\width]{
	   0.45\textwidth}
	   \centering
	   \includegraphics[width=1\textwidth]{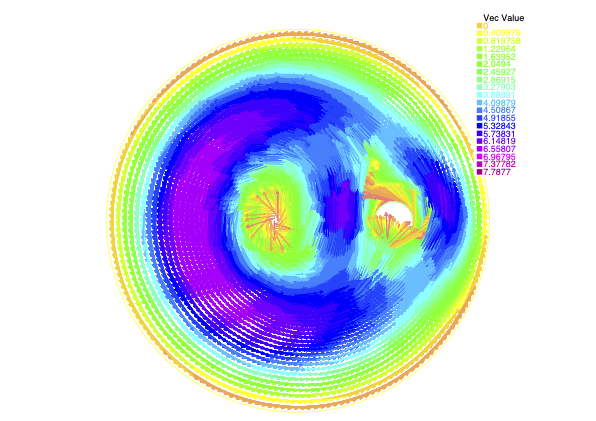}
	\end{minipage}
	}
 \hfill 	
  \subfigure[Elementwise penalty method (Algorithm1) for Stokes problem]{
	\begin{minipage}[c][1\width]{
	   0.45\textwidth}
	   \centering
	   \includegraphics[width=1\textwidth]{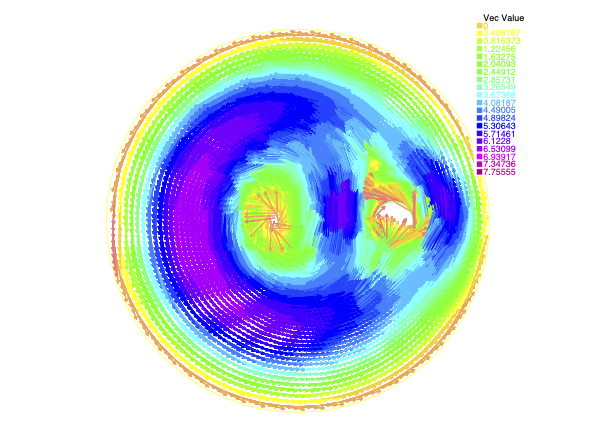}
	\end{minipage}
	}
\caption{velocity plot of Test 2, comparison between coupled \eqref{eq:coupled} and elementwise penalty system (Algorithm 1)}
\label{fig:test2-velocity}
\end{figure}
From the velocity plot Figure \eqref{fig:test2-velocity}, the coupled system and elementwise penalty system have similar results. But the elementwise penalty method has far smaller $\|\nabla\cdot u^h\|^2$ values. 

\subsection{Test3. Comparison test between constant penalty and elementwise penalty see Layton and Xu \cite{penalty-condition}}
In this test, we verify the adaptive elementwise penalty method (Algorithm 1) does better than normal constant penalty method by comparison Algorithm 1 with constant $\epsilon=10^{-8}\nu$ for all elements. Here constant $\epsilon=10^{-8}\nu$ is usually the approach used by engineering papers.

This comparison test problem is solved by using $P1$, conforming linear elements. Let the body force,
\begin{equation*}
    f(x,y) = (\sin(x+y), \cos(x+y))^T,
\end{equation*}
on $\Omega = (0,1)\times (0,1)$. In this test, $Re =1$, global tolerance $TOL = 10^{-6}$ and there are 40 mesh points on each side. The test results are shown in \Cref{tab:compare-constant-elementwise}.
\begin{table}[H]
    \centering
    \begin{tabular}{||c|c|c|||}
    \hline
         & constant penalty $\epsilon=10^{-8}$ & elementwise penalty (Algorithm 1) \\
     \hline
    $\|\nabla\cdot u^h\|^2$    &7.20178e-17&3.7741e-19 \\
    \hline
    average $\epsilon$&1e-8&0.000629366\\
    \hline
    \end{tabular}
    \caption{comparison of $\|\nabla\cdot u^h\|^2$ and average value of $\epsilon$ between constant penalty and elementwise penalty (Algorithm 1)}
    \label{tab:compare-constant-elementwise}
\end{table}
From \Cref{tab:compare-constant-elementwise}, constant penalty $\epsilon=10^{-8}$ is a ill conditioned linear system while elementwise penalty with average $\epsilon=6.3\times10^{-4}$ leads to a much better condioned system. And $\|\nabla\cdot u^h\|^2$ of adaptive elementwise penalty is smaller than constant penalty, thus adaptive elementwise penalty controls $\|\nabla\cdot u\|$ better than constant penalty method.

\subsection{Test4. Flow around a cylinder see Ingram \cite{Ross2013}, John, Matthies and Rang \cite{John2004}} This section is an extension of the elementwise penalty method test on the nonlinear Navier-Stokes equation (Algorithm 2). 
Even though the local condition is only partially satisfied in this test, the global condition is satisfied and well controlled. 

The domain $\Omega$ is a $[0,2.2]\times[0,0.41]$ rectangle. The cylinder $S$ centered at $(0.2,0.2)$ with the diameter $0.1$ units.
The external force $f=0$, the final time is $T=8$ and the prescribed viscosity $\nu=10^{-3}$.
The flow has boundary conditions:
\begin{align*}
u(x,0,t)=u(x,0.41,t)=u|_{\partial\Omega_S}=(0,0)^T,\ \ &0\leq x\leq 2.2, \\
u(0,y,t) = u(2.2,y,t)=0.41^{-2}\sin(\pi t/8)(6y(0.41-y),0)^T,\ \ &0\leq y\leq 0.41.
\end{align*}
The mean inflow velocity is $U(t)=\sin(\pi t/8)$ such that $U_{max}=1$.\\
Let the initial condition satisfy the steady Stokes problem.
The following results using P3 finite element space for velocity. The number of degrees of freedom of velocity is 5091. The mesh is denser near cylinder S, and for this mesh, the shortest edge of all triangles is $min_e h_e=0.0101291 $ and the longest edge $max_e h_e=0.154404$. The smallest area of element triangle $min_\Dt |\Dt|=3.46846\times10^{-5}$ and the largest area of triangle $max_\Dt |\Dt|=0.00773693$. In this test, $\epsilon$ lower bound $LowerEps=10^{-10}$ and global tolerance $TOL=10^{-5}$. The local tolerance $LocTol_\Dt=\frac{1}{2}\frac{TOL^2}{|\Omega|}|\Dt|$ ranges from $10^{-13}$ to $10^{-15}$.
\begin{figure}[h]
    \centering
    \includegraphics[width=\textwidth]{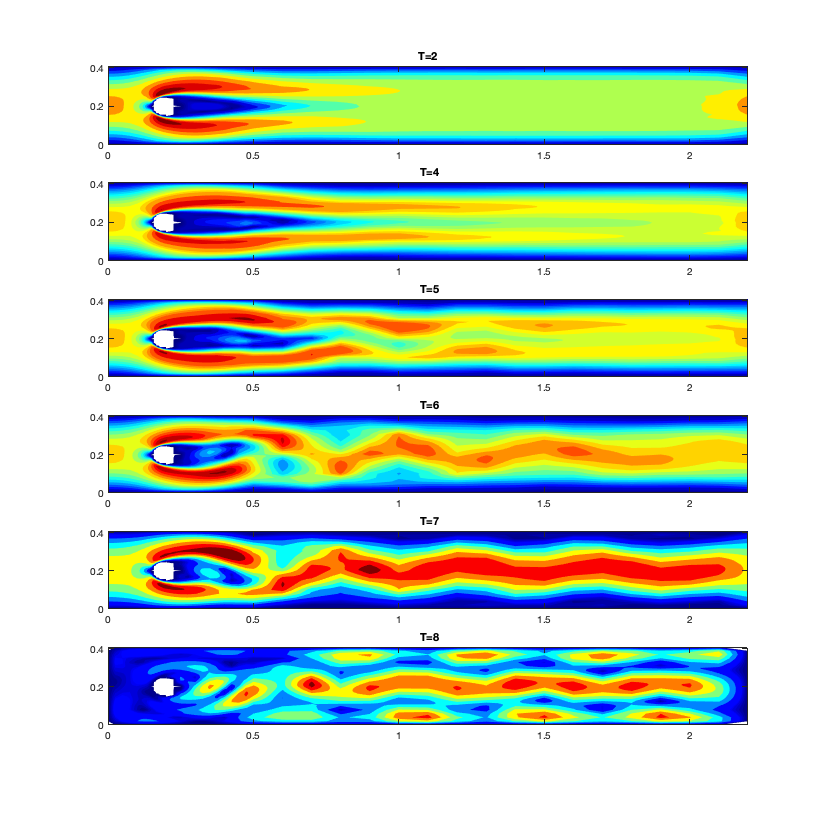}
    \caption{magnitude of velocity field at $T=2,4,5,6,7,8$ of Test 4 Algorithm 2 for NSE, $\Dt t=0.005$}
    \label{fig:test3-speed}
\end{figure}
Figure \eqref{fig:test3-speed} is the speed-profile at $T=2,4,5,6,7,8$ for flow with Re=1000. We can see the vortex shedding off the back of the cylinder in the test result.
\begin{figure}[h]
    \centering
    \includegraphics[width=0.9\textwidth]{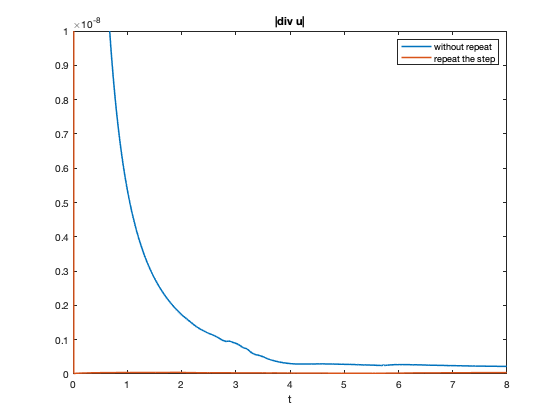}
    \caption{Plot of $\|\nabla\cdot u^h\|^2$ from T=0 to T=8}
    \label{fig:test3_divu_overtime}
\end{figure}
Figure \eqref{fig:test3_divu_overtime} is the plot of $\|\nabla\cdot u^h\|^2$ throughout the whole time interval. 
The red curve (Algorithm 2 with step repeated) has smaller $\|\nabla\cdot u^h\|^2$ values than the blue curve (without repeating the step). Both global $\|\nabla\cdot u\|$ values are well controlled.
\begin{figure}[h]
  \subfigure[$|\nabla\cdot u^h|_\Dt^2/|\Dt|$ at $T_{final}=8$ without repeating the step (Algorithm 2), the scale is about $10^{-8} \sim 10^{-9}$]{
	\begin{minipage}[c][1\width]{
	   0.45\textwidth}
	   \centering
	   \includegraphics[width=1\textwidth]{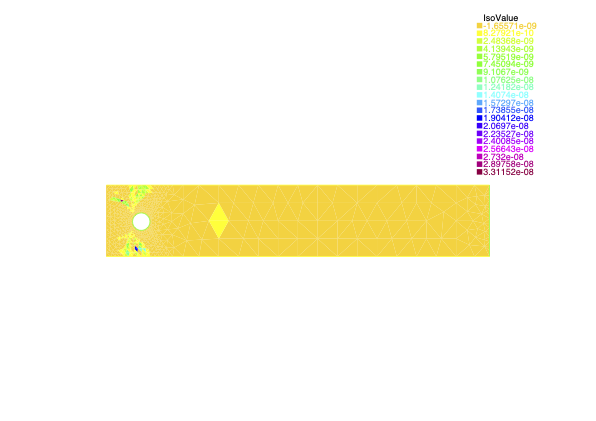}
	\end{minipage}
	}
 \hfill 	
  \subfigure[$|\nabla\cdot u^h|_\Dt^2/|\Dt|$ at $T_{final}=8$ with step repeated (Algorithm 2 with retry), the scale is about $10^{-11}\sim 10^{-12}$]{
	\begin{minipage}[c][1\width]{
	   0.5\textwidth}
	   \centering
	   \includegraphics[width=1\textwidth]{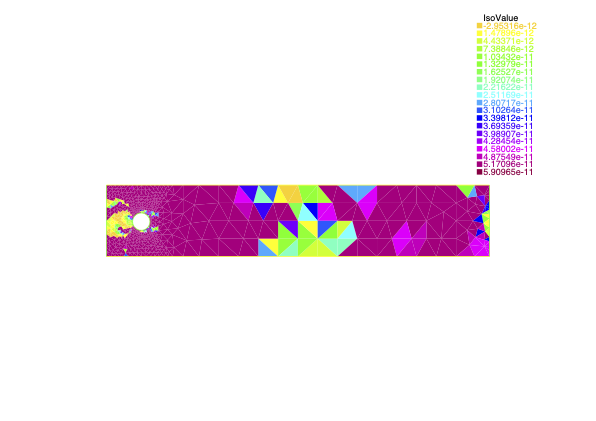}
	\end{minipage}
	}
\caption{result of Test 4 Algorithm 2 for NSE, $\Dt t=0.005$}
\label{fig:test3-divu}
\end{figure}
In order to check the local condition, we look at the elementwise value $|\nabla\cdot u^h|^2_\Dt/|\Dt|$ at the final time T=8. From Figure \eqref{fig:test3-divu}(a) without repeating the step: $\max\|\nabla\cdot u^h\|_\Dt^2\approx3\times10^{-8}|\Dt|\approx 10^{-11}$ slightly larger than the local tolerance $LocTol_\Dt$. From \eqref{fig:test3-divu}(b) with step repeated: $\max\|\nabla\cdot u^h\|_\Dt^2\approx 5\times10^{-11}|\Dt|\approx 10^{-14}$ satisfies the local tolerance. For Algorithm 2 with step repeated, the global and local $\|\nabla\cdot u^h\|$ values are smaller but need more computing time compared with Algorithm 2 without retry. For Algorithm 2 without repeating the step, the overall result is satisfying even though the local conditions are only partially satisfied.

\section{Conclusions} In this paper, we proposed a new variable $\epsilon$ penalty method starting from the Stokes problem. We proved the stability and derived an error approximation of the new pointwise penalty (PP) \eqref{vform} on the Stokes problem. And at the end, we test the algorithm on the Stokes problem and extend it to test the time-dependent nonlinear Navier Stokes problem using elementwise penalty (EP) \eqref{vform2}. This is just a start of this new scheme, there are plenty of improvements possible. Picking the right global tolerance TOL and maximum iteration MaxIter is still a problem to consider. Algorithm 2 is new, we currently do not know if or not we need to repeat each time-step after setting the new $\epsilon$. 
We emphasize that our target is the 3d, time-dependent NSE problem for which the method is implemented as Algorithm 2, 
without appreciable complexity increase over simple, linear constant $\epsilon$ penalty methods.

In this paper, we focused on the velocity and did not pay attention to the accuracy of pressure. Pressure recovery is also a big problem to consider. In Kean and Schneier \cite{Kean2019error}, two different pressure recovery methods are introduced and analyzed. As for the time-dependent problem, only constant time-step schemes are considered in this paper. To further optimize the algorithm, adding a time filter Guzel and Layton \cite{Guzel2017tilter,guzel2018timefilter} and adapt the time-step is also a good research direction in the future.
Both the stability and error analysis is given based on the assumption that the grad-div term can be replaced by the variational form \eqref{grad-div-vform1}. The numerical analysis based on assumption \eqref{grad-div-vform2} (i.e. elementwise penalty) is also an interesting problem.

\section*{Acknowledgement}
I would like to thank Professor William Layton for his brilliant idea for constructing the model and his guidance during the research.

\bibliographystyle{plain}
\bibliography{main}
\end{document}